\documentclass[12pt]{amsart}
\usepackage{amsmath, amsfonts, amssymb, amsthm,hyperref,mathtools,array}
\hypersetup{hypertex=true,
	colorlinks=true,
	linkcolor=blue,
	anchorcolor=blue,
	citecolor=blue}
\usepackage{bm}
\allowdisplaybreaks[4]
\def\({\bg(}
\def\){\bg)}

\def\Gal{{\rm Gal}}

\def\v{{\bm v}}

\def\0{{\bm 0}}

\def\alg{{\rm alg}}

\def\Re{{\rm Re}}

\def\id{{\rm id}}
\def\Im{{\rm Im}}
\def\pmod #1{\ ({\rm{mod}}\ #1)}
\def\mod #1{\ {\rm mod}\ #1}
\def\Ack{\medskip\noindent {\bf Acknowledgments}}

\theoremstyle{plain}
\newtheorem{theorem}{Theorem}[section]
\newtheorem{lemma}{Lemma}
\newtheorem{corollary}{Corollary}

\theoremstyle{definition}

\theoremstyle{remark}
\newtheorem{remark}{Remark}

\makeatletter
\@namedef{subjclassname@2020}{%
	\textup{2020} Mathematics Subject Classification}
\makeatother
\vspace{4mm}

\begin{document}
	
	\title[Products of Jacobi sums and cyclotomic matrices]
	{Products involving the real parts of Jacobi sums and related cyclotomic matrices}
	\author[H.-L. Wu and X.-H. Ji]{Hai-Liang Wu* and Xiao-Han Ji}
	
	\address {(Hai-Liang Wu) School of Science, Nanjing University of Posts and Telecommunications, Nanjing 210023, People's Republic of China}
	\email{\tt whl.math@smail.nju.edu.cn}
	
	\address {(Xiao-Han Ji) School of Science, Nanjing University of Posts and Telecommunications, Nanjing 210023, People's Republic of China}
	\email{\tt jxhmath@163.com}

	\keywords{Jacobi sums, cyclotomic matrices, finite fields.
		\newline \indent 2020 {\it Mathematics Subject Classification}. Primary 11L05, 15A15; Secondary 11R18, 12E20.
		\newline \indent This research was supported by the Natural Science Foundation of China (Grant No. 12101321) and the Natural Science Foundation of the Higher Education Institutions of Jiangsu Province (Grant No. 25KJB110010).
		\newline \indent *Corresponding author.}
	
	\begin{abstract}
		Let $q$ be an odd prime power and $\chi_q$ be a generator of the group of all multiplicative characters of $\mathbb{F}_q$. In this paper, we study the arithmetic properties of the product
		$$R_q(\chi_q)=\prod_{0<k<(q-1)/4}\left(J_q(\phi_q,\chi_q^k)+J_q(\phi_q,\chi_q^{-k})\right),$$
		which is related to the real parts of Jacobi sums. Also, we reveal the connection between $R_q$ and the cyclotomic matrix 
		$$\left[\phi_q(s_i+s_j)\right]_{1\le i,j\le (q-1)/2},$$
		where $\phi_q$ is the unique quadratic multiplicative character of $\mathbb{F}_q$, and $s_1,s_2,\cdots,s_{(q-1)/2}$ are exactly all non-zero squares over $\mathbb{F}_q$. 
	\end{abstract}
	\maketitle
	
	\tableofcontents

	\section{Introduction}
	\setcounter{lemma}{0}
	\setcounter{theorem}{0}
	\setcounter{equation}{0}
	\setcounter{conjecture}{0}
	\setcounter{remark}{0}
	\setcounter{corollary}{0}
	
	\subsection{Notation} Throughout this paper, $p$ always denotes an odd prime. Let $\mathbb{Q}_p$ be the $p$-adic number field and $\mathbb{Q}_p^{\alg}$ be an algebraic closure of $\mathbb{Q}_p$. The ring of all $p$-adic integers is denoted by $\mathbb{Z}_p$. 
	
	Let $q=p^f$ be an odd prime power with $f\in\mathbb{Z}^+$. Let $\mathbb{F}_q$ be the finite field with $q$ elements and $\mathbb{F}_q^{\alg}$ be an algebraic closure of $\mathbb{F}_q$. Also, $\mathbb{F}_q^{\times}=\mathbb{F}_q\setminus\{0\}$ indicates the multiplicative cyclic group of all non-zero elements over $\mathbb{F}_q$. 
	
	The cyclic group of all multiplicative characters of $\mathbb{F}_q$ is denoted by $\widehat{\mathbb{F}_q^{\times}}$, and let $\chi_q$ be a generator of $\widehat{\mathbb{F}_q^{\times}}$. Given any multiplicative character 
	$$\psi: \mathbb{F}_q^{\times}\rightarrow\mathbb{C}^{\times}\ (\text{or}\  \mathbb{Q}_p^{\alg\times}),$$
	we additionally define $\psi(0) = 0$. Also, let $\phi_q$ be the unique quadratic multiplicative character of $\mathbb{F}_q$, that is, 
	$$\phi_q(x)=\begin{cases}
		1   &  \mbox{if}\ x\in\mathcal{S}_q,\\
		0  &  \mbox{if}\ x=0,\\
		-1 &  \mbox{otherwise},
	\end{cases}$$
	where 
	$$\mathcal{S}_q=\left\{s_1,s_2,\cdots,s_{(q-1)/2}\right\}=\left\{x^2:\ x\in\mathbb{F}_q^{\times}\right\}$$
	is the group of all non-zero squares over $\mathbb{F}_q$. For any $\chi,\psi\in\widehat{\mathbb{F}_q^{\times}}$, the Jacobi sum of $\chi$ and $\psi$ is defined by 
	$$J_q(\chi,\psi)=\sum_{x\in\mathbb{F}_q}\chi(x)\psi(1-x)\in\mathbb{Q}(\zeta_{q-1}),$$
	where $\zeta_{q-1}\in\mathbb{C}$ (or $\mathbb{Q}_p^{\alg}$) is a primitive $(q-1)$-th root of unity. 
	
	In addition, for any square matrix $M$ over a field, $\det M$ indicates the determinant of $M$. 
	
	\subsection{Background and motivation} There is a rich and long history concerning the study of Jacobi sums. For example, when $p\equiv 1\pmod 4$ is a prime, it is known that (see \cite[Theorem 6.2.9]{BEW}) the prime $p$ can be written as a sum of two integer squares, that is, 
	$$p=\left(\Re(J_p(\phi_p,\chi_p^{(p-1)/4})\right)^2+\left(\Im(J_p(\phi_p,\chi_p^{(p-1)/4})\right)^2,$$
	where $\Re(J_p(\phi_p,\chi_p^{(p-1)/4})$ and $\Im(J_p(\phi_p,\chi_p^{(p-1)/4})\in\mathbb{Z}$ are the real part and the imaginary part of $J_p(\phi_p,\chi_p^{(p-1)/4})$ respectively. 
	
	On the other hand, let $1<n<q-1$ be a divisor of $q-1$ with $q-1=kn$. Let
	$$C_n=\left\{[x,y,z]\in\mathbb{P}^2(\mathbb{F}_q^{\alg}):\ x^n+y^n=z^n\right\}$$
	be the projective Fermat curve defined over $\mathbb{F}_q$, where $\mathbb{P}^2(\mathbb{F}_q^{\alg})$ is the projective plane. Consider the zeta function of $C_n$ defined by 
	$$\zeta(t)=\exp\left(\sum_{r=1}^{+\infty}\frac{N(q^r)\cdot t^r}{r}\right),$$
	where $N(q^r)$ is the number of $\mathbb{F}_{q^r}$-rational points on $C_n$. As $C_m$ is a nonsingular absolutely irreducible curve of genus $(n-1)(n-2)/2$, by the Weil theorem, there is an integral polynomial $P_n(t)$ with $\deg (P_n(t))=(n-1)(n-2)$ such that 
	$$\zeta(t)=\frac{P_n(t)}{(1-t)(1-qt)}.$$
	Weil \cite{Weil} proved that 
	$$P_n(t)=\prod_{\substack{i,j\in[1,n-1]\\ i+j\not\equiv 0\pmod n}}\left(1+J_q(\chi_q^{ki},\chi_q^{kj})t\right)\in\mathbb{Z}[t],$$
	which is closely related to the products of Jacobi sums. Also, the local properties of Jacobi sums have been investigated extensively. For example, when $q=p$ is an odd prime, for any integers $1\le a,b\le p-2$ with $a+b<p-1$, by the classical Gross-Koblitz formula \cite{GK}, we have 
	$$J_p(\omega_p^{-a},\omega_p^{-b})=\frac{G_p(\omega_p^{-a})G_p(\omega_p^{-b})}{G_p(\omega_p^{-(a+b)})}=-\frac{\Gamma_p(a/(p-1))\Gamma_p(b/(p-1))}{\Gamma_p((a+b)/(p-1))},$$
	where $\omega_p$ is the Teich\"{u}muller character of $\mathbb{F}_p$, $G_p(\omega_p^{-a})$ is the Gauss sum over $\mathbb{F}_p$ and $\Gamma_p$ is the $p$-adic gamma function. 
	
	Next we introduce some works on cyclotomic matrices, which are closely related to Jacobi sums. Let $(\frac{\cdot}{p})$ be the Legendre symbol modulo $p$. Carlitz initiated the study of cyclotomic matrices. For instance, Carlitz \cite{Carlitz} showed that the determinant of the matrix 
	\begin{equation*}
		\det \left[\left(\frac{i-j}{p}\right)\right]_{1\le i,j\le p-1}=(-1)^{\frac{p-1}{2}}\cdot \prod_{k=1}^{p-1}J_p(\phi,\chi_p^k)=p^{\frac{p-3}{2}}.
	\end{equation*}
	As a generalization of the above, Wu and Wang \cite[Theorem 1.1]{Wu-Wang} proved that if $q\equiv 3\pmod 4$ is a prime power, then for any $1\le r\le q-2$, we have
	\begin{equation*}
		\det\left[\chi_q^r(s_i+s_j)+\chi_q^r(s_i-s_j)\right]_{1\le i,j\le (q-1)/2}=\prod_{k=0}^{(q-3)/2}J_q(\chi_q^r,\chi_q^{2k}).
	\end{equation*}
    
    The above results suggest that the product of Jacobi sums may be connected with the determinants of certain cyclotomic matrices. Motivated by the above works, in this paper, we concentrate on the product related to the real parts of Jacobi sums, that is, 
	$$R_q=\prod_{0<k<(q-1)/4}\left(J_q(\phi_q,\chi_q^k)+J_q(\phi_q,\chi_q^{-k})\right).$$ 
	We will see later that $R_q$ is closely related to the determinant of the matrix 
	$$A_q=\left[\phi_q(s_i+s_j)\right]_{1\le i,j\le (q-1)/2}.$$
	
	When $q=p$ is a prime, we see that 
	$$A_p=\left[\left(\frac{i^2+j^2}{p}\right)\right]_{1\le i,j\le (p-1)/2}.$$
	The arithmetic properties of $\det A_p$ have been studied extensively. Z.-W. Sun \cite[Theorem 1.2(iii)]{Sun} showed that 
	$$-\det A_p \mod{p\mathbb{Z}}\in\mathcal{S}_p.$$ 
	Furthermore, as an integer matrix, Z.-W. Sun conjectured that $-\det A_p=w_p^2$ for some integer $w_p$ whenever $p\equiv 3\pmod 4$. This conjecture was later confirmed by Alekseyev and Krachun. For the case $p\equiv 1\pmod 4$, if
	we write $p =c_p^2+4d_p^2$ with $c_p, d_p\in\mathbb{Z}$ and $c_p\equiv1\pmod 4$, then Cohen, Sun and Vsemirnov
	conjectured that (see \cite[Remark 4.2]{Sun}) $\det A_p/c_p=u_p^2$ for some $u_p\in\mathbb{Z}$. Wu \cite{Wucr} proved this conjecture later. 
	
	However, the above works have not provided exact descriptions of the integers $w_p$ and $u_p$. As an advance on this problem, we will determine the integers $w_p$ and $u_p$ completely in Theorem \ref{Thm. xq and det Aq} of this paper. 
	
	\subsection{Main Theorems} Given a real number $r$, define $\lfloor r \rfloor=\max\{k\in\mathbb{Z}:\ k\le r\}$. Now we state our first theorem.
	
	\begin{theorem}\label{Thm. Rq is an integer}
		Let $q=2n+1$ be an odd prime power and $\chi_q$ be a generator of $\widehat{\mathbb{F}_q^{\times}}$. Then $R_q(\chi_q)/2^{\lfloor n/2\rfloor}\in\mathbb{Z}$ and $R_q(\chi_q)$ is independent of the choice of the generator $\chi_q$ of $\widehat{\mathbb{F}_q^{\times}}$. 
	\end{theorem}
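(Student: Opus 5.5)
Write $\chi=\chi_q$ and $J(\psi)=J_q(\phi_q,\psi)$. The plan is to show that $R_q(\chi)$ is a Galois-invariant algebraic integer, so that it lies in $\mathbb{Z}$, and then to extract the power of $2$ factor by factor.

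\emph{Rationality and independence of the generator.} Take $\sigma_a\in\Gal(\mathbb{Q}(\zeta_{q-1})/\mathbb{Q})$ with $\sigma_a(\zeta_{q-1})=\zeta_{q-1}^a$ and $\gcd(a,q-1)=1$. Since $\phi_q$ takes values $\pm1,0$, we have
$$\sigma_a\bigl(J(\chi^k)+J(\chi^{-k})\bigr)=J(\chi^{ak})+J(\chi^{-ak}).$$
Thus $\sigma_a$ carries the set of factors indexed by the classes $\{\pm k\}$, $0<k<(q-1)/4$, to the factors indexed by $\{\pm ak\}$. Multiplication by the unit $a$ permutes the classes $\{\pm k\}\subset(\mathbb{Z}/(q-1)\mathbb{Z})\setminus\{0\}$. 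It fixes the class $\{0\}$, and, because $a$ is odd, it fixes the class $\{(q-1)/2\}$. When $4\mid q-1$ it also fixes the class $\{\pm(q-1)/4\}$: indeed $a(q-1)/4\equiv\pm(q-1)/4 \pmod{q-1}$ since $a\equiv\pm1\pmod 4$. The remaining classes are exactly those with representatives $0<k<(q-1)/4$, so $\sigma_a$ permutes these factors and $\sigma_a(R_q(\chi))=R_q(\chi)$.

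Jacobi sums are algebraic integers, so $R_q(\chi)\in\mathbb{Z}$. Any other generator has the form $\chi^a$ with $a$ a unit, and $R_q(\chi^a)=\sigma_a(R_q(\chi))=R_q(\chi)$, which gives the independence.

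\emph{Divisibility by $2^{\lfloor n/2\rfloor}$.} Count the factors first. If $q-1=4m$, the indices $0<k<m$ give $m-1=\lfloor n/2\rfloor-1$ factors. If $q-1=4m+2$, the indices $0<k\le m$ give $m=\lfloor n/2\rfloor$ factors. Next I claim each factor is $\equiv 0\pmod 2$ as an algebraic integer. The substitution $x\mapsto 1-x$ gives $J(\phi,\psi)=\psi(-1)\phi(-1)J(\phi,\psi)$ up to conjugation; more usefully, the substitution $x\mapsto 1/x$ gives
$$J(\phi,\psi)=\sum_{x\ne0}\phi(x)\psi(1-x)=\phi(-1)\psi(-1)\,J(\phi,\bar\psi\,\phi)\ \text{(roughly)}.$$
Working modulo $2$ directly, all nonzero character values are roots of unity, and modulo $2$ a sum can be paired off. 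Pair $x$ with $x'=x/(x-1)$, which satisfies $1-x'=1/(1-x)$. This gives
$$J(\chi^k)+J(\chi^{-k})=\sum_x \phi(x)\bigl(\chi^k(1-x)+\chi^{-k}(1-x)\bigr).$$
The terms at $x$ and $x'$ contribute $\phi(x)\bigl(\chi^k(1-x)+\chi^{-k}(1-x)\bigr)$ and $\phi(x')\bigl(\chi^{-k}(1-x)+\chi^k(1-x)\bigr)$ respectively, and these coincide when $\phi(x')=\phi(x)$, that is, when $\phi(x-1)=1$. So the sum is congruent modulo $2$ to the terms with $x=x'$, namely $x=0,2$, together with the paired terms with $\phi(x-1)=-1$. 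The latter pairs cancel exactly, since they are negatives of each other. The fixed point $x=2$ contributes $\phi(2)\bigl(\chi^k(-1)+\chi^{-k}(-1)\bigr)=2\phi(2)\chi^k(-1)\equiv0$, and $x=0$ contributes $0$. Hence each factor lies in $2\mathbb{Z}[\zeta_{q-1}]$.

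This yields $2^{\#\text{factors}}\mid R_q(\chi)$ in $\mathbb{Z}[\zeta_{q-1}]$, and hence in $\mathbb{Z}$ because $R_q(\chi)\in\mathbb{Z}$ and $\mathbb{Z}[\zeta_{q-1}]\cap\mathbb{Q}=\mathbb{Z}$. This already settles the case $q\equiv3\pmod4$.

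\emph{Main obstacle.} For $q\equiv1\pmod 4$ one extra factor of $2$ is needed. I would try to get it by a finer congruence: either show that some factor is $\equiv0\pmod{4}$, or work $2$-adically and show that the product of the halves $\prod_k\bigl(J(\chi^k)+J(\chi^{-k})\bigr)/2$ is an even integer. The half of each factor is the sum over one representative per pair $\{x,x'\}$, so its residue modulo $2$ can be computed. Mapping to $\mathbb{F}_2$ via the norm, I expect parity to be governed by the count of $x$ with $\phi(x)=\phi(x-1)=1$, which is $(q-5)/4$ by cyclotomic numbers of order $2$. I would attempt to show that the product of the halves has even norm, possibly by exploiting that the omitted class $k=(q-1)/4$ makes the total product over all classes a square-like expression. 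This last step is where I am least sure, and I would check small cases such as $q=5,9,13$ before committing.
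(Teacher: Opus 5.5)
The Galois-invariance step fails as written. In $(\mathbb{Z}/2n\mathbb{Z})/\{\pm1\}$, the classes other than $\{0\}$, $\{n\}$ and (for even $n$) $\{\pm n/2\}$ are the $\{\pm k\}$ with $0<k<n$, $k\neq n/2$. That is roughly twice as many classes as the ones with $0<k<n/2$ that occur in $R_q$. Multiplication by $a$ permutes the full set but not that subset. For example, with $q=11$ and $a=3$, it sends $\{\pm1\},\{\pm2\}$ to $\{\pm3\},\{\pm4\}$, so $\sigma_3(R_{11})=F_3F_4$, where $F_k=J_q(\phi_q,\chi_q^k)+J_q(\phi_q,\chi_q^{-k})$.

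The missing ingredient is the identity $J_q(\phi_q,\chi_q^k)=\phi_q(-1)J_q(\phi_q,\phi_q\chi_q^{-k})$, that is, $F_k=\phi_q(-1)F_{n-k}$. It folds every class onto one with $0<k<n/2$, so the factors are really indexed by $\pm k$ modulo $n$. It yields $\sigma_a(R_q)=\phi_q(-1)^{N(a)}R_q$ with $N(a)=\#\{k\in(0,n/2):\ n/2<\{ak\}_{2n}<3n/2\}$.

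For $q\equiv 1\pmod{4}$ this finishes the invariance. For $q\equiv 3\pmod{4}$ you must still show that $N(a)$ is even. That is the real content of the paper's argument: its parity lemma, proved via Jenkins' generalization of Gauss's lemma, Pan's floor-function identity and Jacobi reciprocity. Without it you only get $\sigma_a(R_q)=\pm R_q$, hence $R_q^2\in\mathbb{Q}$. That gives neither $R_q\in\mathbb{Z}$ nor independence of the generator.

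Your pairing $x\leftrightarrow x/(x-1)$ is a correct proof that each $F_k\in 2\mathbb{Z}[\zeta_{q-1}]$. It takes a different route from the paper, which writes $F_k=2(-1)^k\lambda_k$ with $\lambda_k$ an eigenvalue of $A_q$. Your count of factors is also right.

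However, the extra factor of $2$ you seek when $q\equiv 1\pmod{4}$ does not exist: with exponent $\lfloor n/2\rfloor$ the statement is false in that case. For $q=5$, $R_5=1$ is an empty product. For $q=9$ and $q=13$, $R_9=-2$ and $R_{13}=-12$ (the paper's own values, which I confirmed by direct computation). The required divisors would be $2$, $4$ and $8$, and none of them divides. The paper's case (ii) actually proves only $R_q/2^{(n-2)/2}\in\mathbb{Z}$ and then wrongly writes $\lfloor n/2\rfloor=(n-2)/2$. Its second theorem, $2\det A_q=a_q(E)x_q^2$, likewise needs $x_q=R_q/2^{(n-2)/2}$; for instance $\det A_9=-3$ and $a_9(E)=-6$ force $x_9^2=1$.

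The correct exponent is $\lfloor (n-1)/2\rfloor$, the number of factors. That is exactly what your factor-by-factor argument gives once the rationality step is repaired. The checks at $q=5,9,13$ you proposed would have exposed this immediately.
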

	
	\begin{remark}\label{Remark of Thm. Rq is an integer}
		As mentioned above, the product $R_q(\chi_q)\in\mathbb{Z}$ is independent of the choice of the generator $\chi_q$ of $\widehat{\mathbb{F}_q^{\times}}$. Thus, from now on, we abbreviate $R_q(\chi_q)$ as $R_q$ and let $x_q=R_q/2^{\lfloor n/2\rfloor}\in\mathbb{Z}$. 
	\end{remark}
	
	The values of $R_q$ for $7\le q\le 29$ are given in the following table.
	\begin{table}[h]
		\centering
		\begin{tabular}{|c|c|c|c|c|c|c|c|c|c|c|}
			\hline
			$q$     & 7    & 9 & 11 & 13 & 17 & 19 & 23 & 25 & 27 & 29 \\
			\hline
			$R_q$ & -4 & -2 & 16 & -12 & -60 & 256 & -1024 & 2400 & 4096 & 320 \\
			\hline
		\end{tabular}
	\end{table}
	
	To state our next theorem, we introduce the following notations. Let $E$ be a rational affine elliptic curve defined by $Y^2=X^3+X$. For any odd prime power $q=p^f$ with $p$ prime and $f\in\mathbb{Z}^+$, let 
	\begin{equation}\label{Eq. definition of trace of Frobenius}
		a_q(E)=-\sum_{x\in\mathbb{F}_q}\phi_q(x^3+x).
	\end{equation}
	
	If $E_p$ is the reduction of $E$ modulo $p$, then one can verify that 
	$$\# E_p(\mathbb{F}_q)=q+1-a_q(E),$$
	where 
	$$E_p(\mathbb{F}_q)=\left\{(x,y)\in\mathbb{F}_q:\ y^2=x^3+x\right\}\cup\{\infty\}$$
	is the set of all $\mathbb{F}_q$-rational points on $E_p$, and $\# S$ denotes the cardinality of a finite set $S$. 
	
	Now we state our second theorem, which reveals the connection between $\det A_q$ and $R_q$. 
	
	\begin{theorem}\label{Thm. xq and det Aq}
		Let $q=2n+1$ be an odd prime power and let $x_q=R_q/2^{\lfloor n/2\rfloor}\in\mathbb{Z}$.  
		
		{\rm (i)} Suppose $q\equiv 3\pmod 4$. Then 
		$$-\det A_q=x_q^2.$$
		
		{\rm (ii)} Suppose $q\equiv 1\pmod 4$. Then 
		$$2\cdot \det A_q=a_q(E)\cdot x_q^2.$$
		In particular, when $q=p\equiv 1\pmod 4$ is a prime, if we write $p=c_p^2+4d_p^2$ with $c_p,d_p\in\mathbb{Z}$ and $c_p\equiv 1\pmod 4$, then 
		$$\frac{\det A_p}{c_p}=x_p^2.$$
	\end{theorem}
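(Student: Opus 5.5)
The natural approach is to diagonalize $A_q$ using the characters of the cyclic group $\mathcal{S}_q$. Since $\phi_q(s_j)=1$, we have $\phi_q(s_i+s_j)=\phi_q(1+s_is_j^{-1})$. So $A_q$ is a group matrix for the cyclic group $\mathcal{S}_q$ of order $n=(q-1)/2$, and its eigenvalues are
$$\lambda_\psi=\sum_{s\in\mathcal{S}_q}\psi(s)\phi_q(1+s),$$
with $\psi$ running over the $n$ characters of $\mathcal{S}_q$. Each such $\psi$ is the restriction of $\chi_q^k$ for some $0\le k<n$. Also $\chi_q^{k}$ and $\chi_q^{k+n}=\phi_q\chi_q^k$ have the same restriction. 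Writing $\sum_{s\in\mathcal{S}_q}g(s)=\frac12\sum_{x\in\mathbb{F}_q^\times}(1+\phi_q(x))g(x)$ and substituting $x\mapsto -x$ gives
$$\lambda_k=\tfrac12\chi_q^k(-1)\bigl(J_q(\phi_q,\chi_q^k)+J_q(\phi_q,\phi_q\chi_q^k)\bigr).$$
The standard identity $J(a,b)=a(-1)J(a,\bar a\bar b)$ turns $J_q(\phi_q,\phi_q\chi_q^k)$ into $\phi_q(-1)^{2}J_q(\phi_q,\chi_q^{-k})=J_q(\phi_q,\chi_q^{-k})$. Hence
$$\lambda_k=\tfrac12\chi_q^k(-1)\bigl(J_q(\phi_q,\chi_q^k)+J_q(\phi_q,\chi_q^{-k})\bigr),$$
which is visibly invariant under $k\mapsto -k$ and $k\mapsto k+n$. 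This formula also underlies Theorem \ref{Thm. Rq is an integer}.

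Next I compute $\det A_q=\prod_{k=0}^{n-1}\lambda_k$ by pairing $k$ with $n-k$; the pairing respects $\lambda_{n-k}=\lambda_k$.

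The trivial character gives $\lambda_0=J_q(\phi_q,\varepsilon)=\sum_{x\ne 1}\phi_q(x)=-1$.

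When $q\equiv 3\pmod 4$, $n$ is odd. The indices $0<k<n$ split into $(n-1)/2$ pairs, indexed exactly by $0<k<(q-1)/4$. The factors $\chi_q^k(-1)^2$ are $1$, so
$$\det A_q=-\prod_{0<k<n/2}\lambda_k^2=-R_q^2/4^{(n-1)/2}=-x_q^2,$$
which is (i).

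When $q\equiv 1\pmod 4$, $n$ is even. Besides the $n/2-1$ pairs there is the self-paired index $k=n/2$. Its character $\chi_q^{n/2}=\rho$ is quartic with $\bar\rho=\rho\phi_q$, so
$$\lambda_{n/2}=\rho(-1)\,\Re J_q(\phi_q,\rho).$$
Thus $\det A_q=-\rho(-1)\Re J_q(\phi_q,\rho)\cdot R_q^2/4^{n/2-1}$, expressed in terms of $x_q^2$.

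It remains to identify $a_q(E)$ with $-2\rho(-1)\Re J_q(\phi_q,\rho)$, up to the constant that must come out of the bookkeeping. For this I write
$$\sum_x\phi_q(x^3+x)=\sum_x\phi_q(x)\phi_q(1+x^2)$$
and group the $x$ according to $t=x^2$. For a nonzero square $t$, $\sum_{x^2=t}\phi_q(x)$ equals $\rho(t)+\bar\rho(t)$ for the appropriate choice of $\rho$. This gives $\sum_t(\rho+\bar\rho)(t)\phi_q(1+t)=2\rho(-1)\Re J_q(\rho,\phi_q)$.

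For the special case $q=p$, I then invoke the classical evaluation of the quartic Jacobi sum, equivalently the Frobenius trace of $Y^2=X^3+X$ (e.g. \cite{BEW}): $\rho(-1)J_p(\phi_p,\rho)=-c_p\pm 2d_pi$ with $c_p\equiv1\pmod 4$, so that $a_p(E)=2c_p$. The claim $\det A_p/c_p=x_p^2$ then follows from (ii).

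The main obstacle is the careful tracking of the constants. This means the factors $\frac12$, the signs $\chi_q^k(-1)$ and $\rho(-1)$, the exact power of $2$ relating $R_q^2$ to $x_q^2=R_q^2/2^{2\lfloor n/2\rfloor}$, and the normalization of $\sum_{x^2=t}\phi_q(x)$. I would first settle these by checking small cases against the table, e.g. $q=13,17$, before committing to the final form of the identity $2\det A_q=a_q(E)x_q^2$.
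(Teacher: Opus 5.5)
Your route is the paper's. You diagonalize $A_q$ by the characters of $\mathcal{S}_q$, which gives $\lambda_k=\frac12\chi_q^k(-1)\bigl(J_q(\phi_q,\chi_q^k)+J_q(\phi_q,\chi_q^{-k})\bigr)$, exactly the paper's eigenvalue lemma. You pair $k$ with $n-k$ via $\lambda_{n-k}=\lambda_{-k}=\lambda_k$, where the paper uses the transformation $J_q(\phi_q,\chi_q^k)=\phi_q(-1)J_q(\phi_q,\phi_q\chi_q^{-k})$; this is the same fact. You then isolate $\lambda_{n/2}=-\frac12 a_q(E)$ when $q\equiv 1\pmod 4$, and finish with $a_p(E)=2c_p$. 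Part (i) is complete as written.

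There is one small slip in deriving $\lambda_k$. After $x\mapsto -x$, the second term carries $\phi_q\chi_q^k(-1)$, not $\chi_q^k(-1)$. Also, the identity $J(a,b)=a(-1)J(a,\bar a\bar b)$ gives $J_q(\phi_q,\phi_q\chi_q^k)=\phi_q(-1)J_q(\phi_q,\chi_q^{-k})$, with a single factor $\phi_q(-1)$. The two errors cancel, so your final formula is right.

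For (ii) you stop exactly where you should commit, and nothing is actually undetermined. Your own computation $\sum_x\phi_q(x^3+x)=2\rho(-1)\Re J_q(\phi_q,\rho)$ gives $\lambda_{n/2}=-\frac12 a_q(E)$ exactly. Hence
\[
\det A_q=\lambda_0\,\lambda_{n/2}\prod_{0<k<n/2}\lambda_k^2=\frac{a_q(E)}{2}\cdot\frac{R_q^2}{4^{n/2-1}},\qquad\text{so}\qquad 2\det A_q=a_q(E)\Bigl(\frac{R_q}{2^{n/2-1}}\Bigr)^2 .
\]
The power of $2$ here is the number of factors in $R_q$. For even $n$ that is $n/2-1$, not $\lfloor n/2\rfloor=n/2$.

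So (ii) holds with $x_q=R_q/2^{(n-2)/2}$. This is what the paper's proof actually uses: it writes $2^{\lfloor n/2\rfloor}=2^{(n-2)/2}$ when proving integrality, and ends with $R_q^2/2^{n-2}=x_q^2$. With the literal normalization $2^{\lfloor n/2\rfloor}$, the identity is off by a factor $4$ and $x_q$ need not be an integer:
\begin{itemize}
\item for $q=5$: $R_5=1$, $\det A_5=1$, $a_5(E)=2$;
\item for $q=13$: $R_{13}=-12$, $\det A_{13}=-27$, $a_{13}(E)=-6$.
\end{itemize}
Your hedging points at a genuine defect in the statement, not in your bookkeeping. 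Say so explicitly rather than deferring to numerical checks.

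In particular, do not calibrate against the table at $q=17$. There $R_{17}=8\lambda_1\lambda_2\lambda_3$ with $\lambda_1\lambda_2\lambda_3$ a rational algebraic integer, so $8\mid R_{17}$ and the entry $-60$ cannot be right. A direct computation gives $\lambda_1=1-2\sqrt2$, $\lambda_2=3$, $\lambda_3=1+2\sqrt2$, $\lambda_4=-1$. Hence $R_{17}=-168$, $\det A_{17}=441$ and $a_{17}(E)=2$, which fits the corrected identity.

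With the normalization fixed, your argument is a complete proof, including the prime case via $a_p(E)=2c_p$.
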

	
	The next theorem concerns the local properties of the integer $R_q$. 
	
	\begin{theorem}\label{Thm. local properties of Rq}
		Let $q=p^f=2n+1$ be an odd prime power with $p$ prime and $f\in\mathbb{Z}^+$. Then 
		$$R_q\equiv 
		\begin{cases}
			0 \pmod{p}    &  \mbox{if}\ f>1\ \text{and}\ q\neq 9,\\
			-2\pmod{p}   &  \mbox{if}\  q=9,\\
			(-1)^{\lfloor n/2 \rfloor}\left(\frac{1}{4}\right)^{\lfloor n/2 \rfloor(1+\lfloor n/2 \rfloor)/2}\prod_{0<k<n/2}\binom{2k}{k} \pmod{p\mathbb{Z}_p}  &  \mbox{if}\ f=1.
		\end{cases}$$
		Moreover, when $f=1$, i.e., $q=p$ is an odd prime, we have 
		$$\left(\frac{R_p}{p}\right)=
		\begin{cases}
			(-1)^{(p-3)/4}  &  \mbox{if}\ p\equiv 3\pmod 4,\\
			1                           &  \mbox{if}\ p\equiv 1\pmod 4.
		\end{cases}.$$
	\end{theorem}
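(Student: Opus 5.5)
Since $R_q\in\mathbb{Z}$ by Theorem \ref{Thm. Rq is an integer}, I would compute it $p$-adically. Take $\chi_q=\omega_q^{-1}$, where $\omega_q$ is the Teichm\"uller character with values in $\mathbb{Z}_p[\zeta_{q-1}]$; this is allowed because $R_q$ does not depend on the generator. Working modulo the maximal ideal $\mathfrak{p}$ of $\mathbb{Z}_p[\zeta_{q-1}]$, we have $\omega_q(x)\equiv x$. Hence
$$J_q(\phi_q,\chi_q^{k})\equiv\sum_{x\in\mathbb{F}_q}x^{n}\,(1-x)^{-k}$$
after reduction. Here the exponent $-k$ is to be read modulo $q-1$, i.e.\ as $(1-x)^{q-1-k}$, with the convention $0^0=0$ handled separately.

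Expanding by the binomial theorem and using $\sum_{x\in\mathbb{F}_q}x^m=-1$ if $(q-1)\mid m$, $m>0$, and $0$ otherwise, gives the following. For $0<k<n$,
$$J_q(\phi_q,\chi_q^{-k})\equiv -\binom{k}{n}(-1)^{n}\ \text{or}\ -\binom{q-1-k}{n}(-1)^n \pmod{\mathfrak p},$$
where the binomial coefficient is taken in $\mathbb{F}_q$. Concretely, one of the pair $J_q(\phi_q,\chi_q^{\pm k})$ is $\equiv 0$ because $k<n$. The other is $\equiv -(-1)^n\binom{q-1-k}{n}$, and in $\mathbb{F}_p$ we have $\binom{-1-k}{n}=(-1)^n\binom{n+k}{n}$. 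So each factor satisfies
$$J_q(\phi_q,\chi_q^k)+J_q(\phi_q,\chi_q^{-k})\equiv -\binom{n+k}{k}\pmod{\mathfrak p}.$$
This has to be checked against the table for small cases.

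\textbf{Case $f=1$.} Here $n+k=(p-1)/2+k$. Using $n\equiv -1/2 \pmod p$, I get
$$\binom{n+k}{k}\equiv\binom{k-1/2}{k}=\frac{\binom{2k}{k}}{4^k}.$$
Then
$$R_p\equiv (-1)^{m}\,4^{-m(m+1)/2}\prod_{0<k<n/2}\binom{2k}{k},\qquad m=\#\{0<k<n/2\}=\lfloor (n-1)/2\rfloor,$$
and I would reconcile $m$ with $\lfloor n/2\rfloor$ in the statement (the boundary term $k=n/2$ presumably needs care).

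\textbf{Case $f>1$.} Lucas' theorem applies to $\binom{n+k}{k}$ with $n=(p^f-1)/2$, whose base-$p$ digits are all $(p-1)/2$. I would exhibit some $k<n/2$ for which adding $k$ to $n$ produces a carry, for instance $k=(p+1)/2$, which carries in the lowest digit. Then $\binom{n+k}{k}\equiv 0$, so $R_q\equiv 0\pmod p$. The exception $q=9$ has $n=4$ and only $k=1$: there $\binom{5}{1}=5\equiv 2 \pmod 3$, giving $R_9\equiv -2$.

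\textbf{Legendre symbol of $R_p$.} I would use $\left(\frac{4}{p}\right)=1$ together with a closed form for $\prod_{k<n/2}\binom{2k}{k}$ modulo squares. Write $\binom{2k}{k}=(2k)!/(k!)^2$, so modulo squares the product is $\prod_{k}(2k)!$. Then apply the reflection $(2k)!\,(p-1-2k)!\equiv(-1)^{2k+1}\cdot(-1)$-type identities (Wilson) to pair terms, and reduce to $\left(\frac{2}{p}\right)$, $\left(\frac{-1}{p}\right)$ and $\left(\frac{n!}{p}\right)$. Mordell's result, $n!\equiv\pm1 \pmod p$ for $p\equiv3\pmod 4$ (and $(n!)^2\equiv -1$ for $p\equiv 1\pmod 4$), then finishes. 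Alternatively, Theorem \ref{Thm. xq and det Aq} gives $R_p=2^{\lfloor n/2\rfloor}x_p$, so $\left(\frac{R_p}{p}\right)=\left(\frac{2}{p}\right)^{\lfloor n/2\rfloor}\left(\frac{x_p}{p}\right)$. Combining this with Sun's result that $-\det A_p$ is a square mod $p$ does not determine the sign of $x_p$, though, so the factorial route is needed.

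The main obstacle is this parity bookkeeping of $\prod(2k)!$ modulo squares. I would first test it numerically against the table ($p=7,11,13,17,19,23,29$) before committing to the pairing scheme.
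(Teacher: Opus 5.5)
The congruence part of your proposal is correct and follows the paper's route. Reducing $J_q(\phi_q,\chi_q^{\pm k})$ modulo $\mathfrak p$ through power sums is exactly the proof of the congruence $J_q(\omega_q^{-i},\omega_q^{-j})\equiv-\binom{i+j}{i}$ (and $\equiv0$ for $i+j\ge q$) that the paper quotes. The carry at $k=(p+1)/2$ is the paper's Lucas argument. Using $n\equiv-1/2$ replaces the paper's identity $J_q(\phi_q,\chi)=\chi(4)J_q(\chi,\chi)$, and the same congruence settles $q=9$ without a computer. When $n\ge p$, justify $(-1)^n\binom{q-1-k}{n}\equiv\binom{n+k}{n}\pmod p$ via $(1-x)^{q-1-k}=(1-x^q)(1-x)^{-1-k}$ in $\mathbb{F}_p[[x]]$, not by evaluating $\binom{-1-k}{n}$ in $\mathbb{F}_p$.

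Your exponent $m=\lfloor (n-1)/2\rfloor$ is the right one, and there is no $k=n/2$ factor to account for. The printed $\lfloor n/2\rfloor$ is an off-by-one slip for even $n$. The same slip occurs in Theorem~\ref{Thm. Rq is an integer}, whose proof equates $2^{\lfloor n/2\rfloor}$ with $2^{(n-2)/2}$. For example, $R_{17}=-168\equiv2\pmod{17}$ matches $m=3$, whereas exponent $4$ gives $-2$. Do not calibrate against the table. Its entries for $q=17,19$ are wrong: the true values are $-168$ and $-512$, and $-60$ is not even divisible by $2^3$, as Lemma~\ref{Lem. eigenvalues of Aq} requires.

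The genuine gap is the Legendre symbol, and the mechanism you propose does not reach it. Wilson's reflection $j!\,(p-1-j)!\equiv(-1)^{j+1}$ has nothing to pair with. Every $(2k)!$ in $\prod_{0<k<n/2}\binom{2k}{k}$ has $2k<p/2$, so its partner is absent. Modulo squares you are left with a product of the $j<p/2$ in certain classes mod $4$, which is the paper's $t_w$.

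A reflection that does work is $\binom{n+k}{k}\equiv(-1)^k\binom nk$ together with $\binom nk=\binom n{n-k}$. For $p\equiv3\pmod 4$ it gives, modulo squares, $\prod_{0<k<n/2}\binom nk\equiv(n!)^m2^m\,m!$ with $m=\lfloor p/4\rfloor$. So you still need $\left(\frac{\lfloor p/4\rfloor!}{p}\right)$, the parity of the number of non-residues in $(0,p/4)$. That is information about how residues are distributed in a quarter-interval, and it comes from an additive identity, not a multiplicative pairing. For $p\equiv3\pmod4$, splitting by the parity of $k$ gives $\sum_{0<k<p/2}\left(\frac kp\right)=\left(\frac2p\right)\bigl(\sum_{0<k<p/4}-\sum_{p/4<k<p/2}\bigr)\left(\frac kp\right)$. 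Hence $\sum_{0<k<p/4}\left(\frac kp\right)=\frac12\bigl(1+\left(\frac2p\right)\bigr)\sum_{0<k<p/2}\left(\frac kp\right)$. Only then does the evaluation of $\frac{p-1}{2}!$ (Mordell) finish the argument; this is the content of the paper's lemma on $y_p+z_p$.

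For $p\equiv1\pmod4$ the paper cites Wu--Pan's $\left(\frac{B_p}{p}\right)=\left(\frac2p\right)=\left(\frac{n!}{p}\right)$. Here your idea can be completed, but you must supply the details. Modulo squares, $\prod_{0<k<n/2}\binom nk\equiv(n!)^{n/2}2^{n/2}$. Counting non-residues in $(0,p/2)$ gives $\left(\frac{n!}{p}\right)=(-1)^{(p-1)/4}=\left(\frac2p\right)$. Note that $(n!)^2\equiv-1$ alone says nothing about $\left(\frac{n!}{p}\right)$.
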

	
	As a direct consequence of the above theorems, when $p\equiv 3\pmod 4$, we can determine the explicit value of $R_p$. 
	
	\begin{corollary}\label{Corollary for Rp when p=3 mod 4}
		Let $p=2n+1\equiv 3\pmod 4$ be a prime. Then 
		$$R_p=\delta_p\cdot\sqrt{-2^{n-1}\det A_p},$$
		where $\delta_p\in\{\pm 1\}$ such that 
		$$\left(\frac{-2\delta_p\cdot\sqrt{-2^{n-1}\det A_p}}{p}\right)=1.$$
	\end{corollary}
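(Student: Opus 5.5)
Since $p\equiv 3\pmod 4$ gives $n=(p-1)/2$ odd, we have $\lfloor n/2\rfloor=(n-1)/2$. By Theorem \ref{Thm. Rq is an integer}, $R_p=2^{(n-1)/2}x_p$ with $x_p\in\mathbb{Z}$. Theorem \ref{Thm. xq and det Aq}(i) gives $-\det A_p=x_p^2$. Therefore
$$R_p^2=2^{n-1}x_p^2=-2^{n-1}\det A_p .$$
This quantity is a nonnegative integer square, so $\sqrt{-2^{n-1}\det A_p}=|R_p|$ is a nonnegative integer. It follows that $R_p=\delta\cdot\sqrt{-2^{n-1}\det A_p}$ for some sign $\delta\in\{\pm1\}$, namely $\delta=\operatorname{sgn}R_p$ when $R_p\neq 0$.

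It remains to show that this sign is the $\delta_p$ singled out by the Legendre-symbol condition. This is where Theorem \ref{Thm. local properties of Rq} enters. First, $R_p\not\equiv 0\pmod p$, since its Legendre symbol is $\pm1$; in particular $R_p\neq 0$. The theorem also gives $\left(\frac{R_p}{p}\right)=(-1)^{(p-3)/4}$.

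Next we compute $\left(\frac{-2}{p}\right)$ for $p\equiv 3\pmod 4$:
\begin{itemize}
\item if $p\equiv 3\pmod 8$, then $\left(\frac{-2}{p}\right)=1$, and $(p-3)/4$ is even;
\item if $p\equiv 7\pmod 8$, then $\left(\frac{-2}{p}\right)=-1$, and $(p-3)/4$ is odd.
\end{itemize}
In both cases $\left(\frac{-2}{p}\right)=(-1)^{(p-3)/4}$. Hence
$$\left(\frac{-2R_p}{p}\right)=\left(\frac{-2}{p}\right)\left(\frac{R_p}{p}\right)=\bigl((-1)^{(p-3)/4}\bigr)^2=1 .$$
So the sign $\delta=\operatorname{sgn}R_p$ satisfies the defining condition $\left(\frac{-2\delta\sqrt{-2^{n-1}\det A_p}}{p}\right)=1$.

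Finally, $\delta_p$ is uniquely determined by this condition. Since $p\equiv 3\pmod 4$, we have $\left(\frac{-1}{p}\right)=-1$, so replacing $\delta$ by $-\delta$ flips the Legendre symbol. The symbol is nonzero because $p\nmid R_p$. Therefore exactly one sign satisfies the condition, it must equal $\operatorname{sgn}R_p$, and $R_p=\delta_p\sqrt{-2^{n-1}\det A_p}$.

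The only nonroutine input is the quadratic-character statement in Theorem \ref{Thm. local properties of Rq}; the rest is elementary bookkeeping.
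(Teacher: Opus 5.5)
Your proposal is correct and follows the route the paper intends; the paper states the corollary as a direct consequence of Theorems \ref{Thm. Rq is an integer}--\ref{Thm. local properties of Rq}. Theorems \ref{Thm. Rq is an integer} and \ref{Thm. xq and det Aq}(i) give $R_p^2=-2^{n-1}\det A_p$, and the identity $\left(\frac{R_p}{p}\right)=(-1)^{(p-3)/4}=\left(\frac{-2}{p}\right)$ from the proof of Theorem \ref{Thm. local properties of Rq} then fixes the sign. Your explicit check that $\delta_p$ is uniquely determined, via $p\nmid R_p$ and $\left(\frac{-1}{p}\right)=-1$, is a detail the paper leaves implicit.
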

	
	\subsection{Outline of this paper} We will prove Theorems \ref{Thm. Rq is an integer}--\ref{Thm. local properties of Rq} in Sections 1--3 respectively. 
	
	\section{Proof of Theorem \ref{Thm. Rq is an integer}}
	\setcounter{lemma}{0}
	\setcounter{theorem}{0}
	\setcounter{equation}{0}
	\setcounter{conjecture}{0}
	\setcounter{remark}{0}
	\setcounter{corollary}{0}
	
	Throughout this section, we always view the Jacobi sum $J_q(\chi_q^i,\chi_q^j)$ as an element of the global field $\mathbb{Q}(\zeta_{q-1})\subseteq \mathbb{C}$ for any $i,j\in\mathbb{Z}$. 
	
	We begin with the following result.
	
	\begin{lemma}\label{Lem. eigenvalues of Aq}
		Let $q=2n+1$ be an odd prime power. Then for any generator $\chi_q$ of $\widehat{\mathbb{F}_q^{\times}}$, the numbers $\lambda_1(\chi_q), \lambda_2(\chi_q), \cdots, \lambda_n(\chi_q)$ are exactly all the eigenvalues of $A_q$, where 
		$$\lambda_k(\chi_q)=\sum_{x\in\mathcal{S}_q}\phi_q(1+x)\chi_q^k(x)=\frac{(-1)^k}{2}\left(J_q(\phi_q,\chi_q^k)+J_q(\phi_q,\chi_q^{-k})\right)$$
		is an algebraic integer	for any $k\in[1,n]$. 
	\end{lemma}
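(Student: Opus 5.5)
We view $A_q$ as the matrix of a convolution-type operator on the cyclic group $\mathcal{S}_q$. Since $\phi_q(s_i+s_j)=\phi_q(s_j)\phi_q(s_i/s_j+1)=\phi_q(1+s_i/s_j)$, the entry depends only on the quotient $s_i/s_j$. Hence $A_q$ is a group matrix $[f(s_is_j^{-1})]$ for the cyclic group $\mathcal{S}_q$ of order $n$, with $f(x)=\phi_q(1+x)$.

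The characters of $\mathcal{S}_q$ are exactly the restrictions of $\chi_q^k$ for $k\in[1,n]$. These are pairwise distinct, because $\chi_q^k$ restricted to $\mathcal{S}_q$ is trivial iff $n\mid k$. For each such $k$, set $v_k=(\chi_q^k(s_j))_j$. Substituting $x=s_i/s_j$ gives
$$(A_qv_k)_i=\sum_j f(s_i/s_j)\chi_q^k(s_j)=\chi_q^k(s_i)\sum_{x\in\mathcal{S}_q}f(x)\chi_q^{-k}(x).$$
The vectors $v_k$ are linearly independent by orthogonality of characters, so the eigenvalues of $A_q$ are $\sum_{x\in\mathcal S_q}\phi_q(1+x)\chi_q^{-k}(x)$ for $k\in[1,n]$. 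As $k$ runs over $[1,n]$, $-k$ runs over the same residues modulo $n$, and $\chi_q^{k}$ and $\chi_q^{k+n}$ agree on $\mathcal S_q$. Reindexing therefore gives exactly the multiset $\{\lambda_k(\chi_q)\}$. Each $\lambda_k$ is a sum of roots of unity, hence an algebraic integer.

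Next we derive the Jacobi sum expression. Use $\mathbf 1_{\mathcal S_q}(x)=\frac12(\chi_0(x)+\phi_q(x))$ for $x\ne0$, and note that $\phi_q=\chi_q^n$. Then
$$\lambda_k=\frac12\sum_{x\ne0}\phi_q(1+x)\chi_q^k(x)+\frac12\sum_{x\ne0}\phi_q(1+x)\chi_q^{k+n}(x).$$
Substituting $x\mapsto -x$ gives
$$\sum_x\phi_q(1+x)\psi(x)=\psi(-1)\sum_x\phi_q(1-x)\psi(x)=\psi(-1)J_q(\psi,\phi_q).$$
Here $\chi_q^k(-1)=(-1)^k$, since $\chi_q(-1)=-1$ for a generator. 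Also $\chi_q^{k+n}(-1)=(-1)^{k+n}$. So
$$\lambda_k=\frac{(-1)^k}{2}\bigl(J_q(\phi_q,\chi_q^k)+(-1)^nJ_q(\phi_q,\chi_q^{k+n})\bigr).$$

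It remains to show $(-1)^nJ_q(\phi_q,\chi_q^{k+n})=J_q(\phi_q,\chi_q^{-k})$. We use the standard identity
$$J(\phi,\psi)=\psi(-1)J(\phi,\overline{\phi\psi}),$$
valid when $\phi\psi$ is nontrivial. To prove it, substitute $x\mapsto 1/x$ in $J(\phi,\psi)=\sum\phi(x)\psi(1-x)$. This gives $\sum\phi(x)\psi(x-1)\overline{\phi\psi}(x)$, and then the substitution $x\mapsto 1-x$, up to the sign $\psi(-1)$, finishes it. Take $\psi=\chi_q^{k+n}$, so that $\overline{\phi_q\psi}=\chi_q^{-k-2n}=\chi_q^{-k}$. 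The sign is $\chi_q^{k+n}(-1)=(-1)^{k+n}$, which yields $(-1)^nJ_q(\phi_q,\chi_q^{k+n})=(-1)^kJ_q(\phi_q,\chi_q^{-k})$.

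This sign does not match directly, so the bookkeeping of $(-1)$ factors is the main obstacle. We will handle it by instead taking the alternative route: in the second sum, substitute $x\mapsto1/x$ directly, which gives $\phi_q(1+1/x)=\phi_q(x)\phi_q(1+x)$ and turns $\chi_q^{k+n}$ into $\chi_q^{-k}$. One must check that the nontriviality hypothesis holds for $k\in[1,n]$, with $\phi_q\psi$ trivial only in the edge case $k=n$, which is treated separately with $J(\phi,\phi)=-\phi(-1)$.
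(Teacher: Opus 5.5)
Your eigenvector argument and the route you finally settle on are correct, and they are essentially the paper's proof. You split with $\mathbf{1}_{\mathcal S_q}=\tfrac12(\chi_0+\phi_q)$, send $x\mapsto 1/x$ in the twisted half to turn $\sum_{x\neq0}\phi_q(1+x)\chi_q^{k+n}(x)$ into $\sum_{x\neq0}\phi_q(1+x)\chi_q^{-k}(x)$, and then apply $x\mapsto -x$ to both halves. This is the paper's decomposition $2\lambda_k=(\lambda_k+n_k)+(\lambda_k-n_k)$, packaged more cleanly. The paper substitutes $s_j/s_i$ rather than $s_i/s_j$, so $v_k$ has eigenvalue $\lambda_k$ directly and no reindexing is needed; that difference is cosmetic. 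So the proof is complete once the detour before it is removed.

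That detour should be deleted or corrected, because the quoted ``standard identity'' $J(\phi,\psi)=\psi(-1)J(\phi,\overline{\phi\psi})$ is false. For $q=5$ and $\psi=\chi_5$ one has $\overline{\phi_5\psi}=\psi$ and $\psi(-1)=-1$, so it would force $J_5(\phi_5,\chi_5)=0$, which is impossible since $|J_5(\phi_5,\chi_5)|^2=5$.

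The correct statement is $J(\phi,\psi)=\phi(-1)J(\phi,\overline{\phi\psi})$, which is exactly the transformation formula $J_q(\phi_q,\chi_q^k)=\phi_q(-1)J_q(\phi_q,\phi_q\chi_q^{-k})$ the paper quotes later. An equivalent form is $J(\phi,\psi)=\psi(-1)J(\psi,\overline{\phi\psi})$, and this is what your substitution actually yields. Indeed, $x\mapsto 1/x$ gives $\sum_x\overline{\phi\psi}(x)\psi(x-1)$; your written expression carries an extra factor $\phi(x)$. So $\psi$, not $\phi$, ends up in the first slot.

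With the correct form and $\psi=\chi_q^{k+n}$ you get $J_q(\phi_q,\chi_q^{k+n})=(-1)^nJ_q(\phi_q,\chi_q^{-k})$, which is exactly what you needed. The sign mismatch came from the misstated identity, not from delicate bookkeeping.

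Your closing remark about nontriviality is also moot. The map $x\mapsto1/x$ is a bijection of $\mathbb F_q^\times$ for every $k$, so no hypothesis is needed. Moreover $\phi_q\psi=\chi_q^{k}$ is never trivial for $k\in[1,n]$; at $k=n$ it is $\psi$ itself that is trivial.
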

	
	\begin{proof}
		For any integer $k\in [1,n]$, one can verify that 
		\begin{align*}
			\sum_{1\le j\le n}\phi_q(s_i+s_j)\chi_q^k(s_j)
			&=\sum_{1\le j\le n}\phi_q(1+s_j/s_i)\chi_q^k(s_j/s_i)\chi_q^k(s_i)\\
			&=\sum_{1\le j\le n}\phi_q(1+s_j)\chi_q^k(s_j)\chi_q^k(s_i)\\
			&=\lambda_k(\chi_q)\chi_q(s_i),
		\end{align*}
		where 
		$$\lambda_k(\chi_q)=\sum_{1\le j\le n}\phi_q(1+s_j)\chi_q^k(s_j)=\sum_{x\in\mathcal{S}_q}\phi_q(1+x)\chi_q^k(x)$$
		is an algebraic integer. This implies that 
		\begin{equation}\label{Eq. a in the proof of Lem. eigenvalues of Aq}
			A_q\v_k(\chi_q)=\lambda_k(\chi_q)\v_k(\chi_q)
		\end{equation}
		for any integer $k\in[1,n]$, where the column vector 
		$$\v_k(\chi_q)=\left(\chi_q^k(s_1), \chi_q^k(s_2), \cdots, \chi_q^k(s_n)\right)^T$$
		Observing that the vectors $\v_1(\chi_q), \v_2(\chi_q), \cdots, \v_n(\chi_q)$ are linearly independent, the numbers $\lambda_1(\chi_q), \lambda_2(\chi_q), \cdots, \lambda_n(\chi_q)$ are precisely all the eigenvalues of $A_q$ by (\ref{Eq. a in the proof of Lem. eigenvalues of Aq}). 
		
		Next we consider the number $\lambda_k(\chi_q)$. Let $\mathcal{N}_q=\mathbb{F}_q^{\times}\setminus\mathcal{S}_q$. For any integer $k\in[1,n]$, define the sum 
		$$n_k(\chi_q)=\sum_{x\in\mathcal{N}_q}\phi_q(1+x)\chi_q^k(x).$$
		Then one can verify that 
		\begin{align*}
			\sum_{x\in\mathcal{N}_q}\phi_q(1+x)\chi_q^{-k}(x)
			&=\sum_{x\in\mathcal{N}_q}\phi_q(x)\phi_q\left(1+\frac{1}{x}\right)\chi_q^k\left(\frac{1}{x}\right)\\
			&=-\sum_{x\in\mathcal{N}_q}\phi_q\left(1+\frac{1}{x}\right)\chi_q^k\left(\frac{1}{x}\right)\\
			&=-\sum_{x\in\mathcal{N}_q}\phi_q(1+x)\chi_q^k(x)\\
			&=-n_k(\chi_q).
		\end{align*}
		Applying this, for any integer $k\in[1,n]$, we immediately obtain 
		\begin{align*}
			2\lambda_k(\chi_q)
			&=\left(\lambda_k(\chi_q)+n_k(\chi_q)\right)+\left(\lambda_k(\chi_q)-n_k(\chi_q)\right)\\
			&=\sum_{x\in\mathbb{F}_q^{\times}}\phi_q(1+x)\chi_q^k(x)+\sum_{x\in\mathcal{S}_q}\phi_q(1+x)\chi_q^k(x)+\sum_{x\in\mathcal{N}_q}\phi_q(1+x)\chi_q^{-k}(x)\\
			&=\sum_{x\in\mathbb{F}_q^{\times}}\phi_q(1+x)\chi_q^k(x)+\sum_{x\in\mathcal{S}_q}\phi_q(x)\phi_q\left(1+\frac{1}{x}\right)\chi_q^k\left(\frac{1}{x}\right)+\sum_{x\in\mathcal{N}_q}\phi_q(1+x)\chi_q^{-k}(x)\\
			&=\sum_{x\in\mathbb{F}_q^{\times}}\phi_q(1+x)\chi_q^k(x)+\sum_{x\in\mathcal{S}_q}\phi_q(1+x)\chi_q^{-k}(x)+\sum_{x\in\mathcal{N}_q}\phi_q(1+x)\chi_q^{-k}(x)\\
			&=\sum_{x\in\mathbb{F}_q^{\times}}\phi_q(1+x)\chi_q^k(x)+\sum_{x\in\mathbb{F}_q^{\times}}\phi_q(1+x)\chi_q^{-k}(x)\\
			&=(-1)^k\sum_{x\in\mathbb{F}_q^{\times}}\phi_q(1+x)\chi_q^k(-x)+(-1)^k\sum_{x\in\mathbb{F}_q^{\times}}\phi_q(1+x)\chi_q^{-k}(-x)\\
			&=(-1)^k\cdot\left(J_q(\phi_q,\chi_q^k)+J_q(\phi_q,\chi_q^{-k})\right).
		\end{align*}
		
		In view of the above, we have completed the proof. 
	\end{proof}
	
	Let $m\ge1$ be an integer. For any integer $x$, let $\{x\}_m$ be the unique integer in the interval $[0,m)$ such that $\{x\}_m\equiv x\pmod{m\mathbb{Z}}$.

	As a generalization of the classical Gauss lemma, Jenkins \cite{Jenkins} proved the following result.
	
	\begin{lemma}\label{Lem. Gauss lemma for q=3 mod 4}
		Let $m$ be a positive odd integer and let $a\in\mathbb{Z}$ with $\gcd(a,m)=1$. Then 
		$$(-1)^{\#\left\{k\in(0, m/2):\ \{ak\}_m>m/2\right\}}=\left(\frac{a}{m}\right),$$
		where $(\frac{\cdot}{m})$ is the Jacobi symbol. Moreover, 
		$$\left(\frac{a}{m}\right)=(-1)^{K_a},$$
		where 
		$$K_a=\sum_{0<k<m/2}\left\lfloor\frac{2ak}{m}\right\rfloor.$$
	\end{lemma}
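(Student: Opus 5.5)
The plan has three steps: prove the second identity by a parity argument, then prove the first by induction on $a+m$, modelled on the Gauss--Eisenstein proof of quadratic reciprocity. Throughout write $\mu(a,m)=\#\{k\in(0,m/2):\ \{ak\}_m>m/2\}$.

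\emph{Step 1: $\mu(a,m)\equiv K_a\pmod 2$.} For $0<k<m/2$ write $ak=m\lfloor ak/m\rfloor+\{ak\}_m$. Then
$$\left\lfloor\frac{2ak}{m}\right\rfloor=2\left\lfloor\frac{ak}{m}\right\rfloor+\left\lfloor\frac{2\{ak\}_m}{m}\right\rfloor .$$
Since $m$ is odd and $\gcd(a,m)=1$, we have $\{ak\}_m\neq m/2$, so the last term equals $1$ exactly when $\{ak\}_m>m/2$ and $0$ otherwise. Summing over $k$ gives $K_a\equiv\mu(a,m)\pmod 2$, so the ``moreover'' assertion follows once the first identity is proved.

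\emph{Step 2: reduce to $S(a,m)=\sum_{0<k<m/2}\lfloor ak/m\rfloor$ for odd $a$.} Put $r_k=\{ak\}_m$ and note that $\{\min(r_k,m-r_k)\}_{0<k<m/2}$ is a permutation of $\{1,\dots,(m-1)/2\}$. Sum the identities $ak=m\lfloor ak/m\rfloor+r_k$ over $0<k<m/2$ and compare with $\sum_k\min(r_k,m-r_k)=(m^2-1)/8$. Reducing mod $2$ with $m$ odd gives
$$(a-1)\frac{m^2-1}{8}\equiv S(a,m)+\mu(a,m)\pmod 2,$$
which is Gauss's classical trick; it only uses that $m$ is odd, not that it is prime. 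Hence for odd $a$ we get $\mu(a,m)\equiv S(a,m)\pmod 2$. For even $a$, replace $a$ by $a+m$, which is odd; both $\mu$ and $\left(\frac{a}{m}\right)$ depend only on $a\bmod m$. We may also assume $a>0$ by the same periodicity.

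\emph{Step 3: induction on $a+m$ for odd coprime $a,m>0$.} Eisenstein's lattice-point count of the rectangle $(0,m/2)\times(0,a/2)$, split by the diagonal $y=ax/m$ (which contains no lattice point), gives
$$S(a,m)+S(m,a)=\frac{(a-1)(m-1)}{4}$$
for arbitrary odd coprime $a,m$. Combined with Step 2 this yields
$$(-1)^{\mu(a,m)}(-1)^{\mu(m,a)}=(-1)^{\frac{(a-1)(m-1)}{4}},$$
which is exactly the reciprocity law satisfied by Jacobi symbols. Both sides of the claimed identity are therefore symbols that are periodic mod $m$ in $a$ and obey the same reciprocity law. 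They also satisfy the same supplementary law for $a=2$: $\mu(2,m)=\#\{k:\ m/4<k<m/2\}$ has parity $(m^2-1)/8$, matching $\left(\frac{2}{m}\right)$. The base case $a=1$ (or $m=1$) is trivial since $\mu(1,m)=0$.

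To run the induction, fix odd coprime $a,m$ with $a<m$. Reciprocity expresses $(-1)^{\mu(a,m)}$ through $(-1)^{\mu(m,a)}$, which equals $(-1)^{\mu(m\bmod a,\,a)}$ by periodicity. If $m\bmod a$ is even, first replace it by $(m\bmod a)+a$, which is odd but no longer smaller than $a$. Then use reciprocity once more: the symbol with numerator $a$ and the odd modulus $(m\bmod a)+a$, reduced mod that modulus, is back at numerator $a$ and modulus $(m\bmod a)+a<m$. So the pair $(a,m)$ is replaced by one with smaller sum. The identical manipulations applied to Jacobi symbols show that both sides transform identically at every stage, so the induction closes.

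The main obstacle I expect is this last step: bookkeeping the even-residue case, including how often the $a=2$ supplementary law is needed, so that the descent strictly decreases $a+m$ and both sides pick up exactly the same signs. A cleaner fallback would be the Frobenius--Zolotarev theorem identifying $\left(\frac{a}{m}\right)$ with the sign of $x\mapsto ax$ on $\mathbb{Z}/m\mathbb{Z}$. This could be combined with the observation that
$$(-1)^{\mu(a,m)}=\prod_{0<k<m/2}\frac{\sin(2\pi ak/m)}{\sin(2\pi k/m)},$$
which makes $(-1)^{\mu(a,m)}$ visibly multiplicative in $a$.
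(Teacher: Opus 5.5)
The paper does not prove this lemma; it quotes it from Jenkins, so your argument has to stand on its own. Steps 1 and 2 and the Eisenstein lattice count are correct. The descent in Step 3, however, does not terminate. If $a<m<2a$ (both odd), then $m\bmod a=m-a$ is even and $(m\bmod a)+a=m$. Your two applications of reciprocity therefore return exactly to $(a,m)$, not to a pair with smaller sum. For example, $(5,9)\to(4\bmod 5,\,5)\to(9,5)\to(5,9)$. This loop is where the reduction always ends once it meets an even residue other than $2$.

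This is not just bookkeeping. Periodicity, reciprocity for positive odd coprime pairs, the value at $a=2$ and the base cases do not determine the symbol. Each reciprocity relation for odd coprime $a<m$ links the class $(a\bmod m,\,m)$, whose least positive residue is odd, to the class $(m\bmod a,\,a)$ of smaller modulus. Thus every odd-residue class has exactly one downward link, and a class $b\bmod c$ with $b$ even in $(0,c)$ is linked only to larger moduli. The relations therefore form a forest rooted at $(0\bmod 1)$ and at the even classes. You could flip the sign of the Jacobi symbol on the whole tree rooted at $(4\bmod 5,\,5)$ without violating any property you use. So your argument cannot establish, for instance, $(-1)^{\mu(4,5)}=\left(\frac{4}{5}\right)$.

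The missing ingredient is a relation that turns an even residue into an odd one at the same modulus. The cheapest is negation. Since $\{-ak\}_m=m-\{ak\}_m$, you get $\mu(-a,m)=\frac{m-1}{2}-\mu(a,m)$, that is, $(-1)^{\mu(-a,m)}=(-1)^{(m-1)/2}(-1)^{\mu(a,m)}$. This matches $\left(\frac{-a}{m}\right)=\left(\frac{-1}{m}\right)\left(\frac{a}{m}\right)$. Then induct on $m$:
\begin{itemize}
\item reduce $a$ into $(0,m)$;
\item if $a$ is even, replace it by $m-a$, which is odd and lies in $(0,m)$, at the cost of $(-1)^{(m-1)/2}$ on both sides;
\item apply reciprocity to pass to modulus $a<m$.
\end{itemize}
This route needs no supplementary law at $2$.

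Alternatively, promote your sine-product observation into the main argument. It gives multiplicativity of $(-1)^{\mu(\cdot,m)}$ in the numerator. Then strip powers of $2$ using your $a=2$ computation and flip an odd residue, so the modulus strictly drops. No Frobenius--Zolotarev theorem is needed.
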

	
	Pan \cite[Lemma 1]{Pan} obtained the following result.
	
	\begin{lemma}\label{Lem. Pan}
		Let $m$ be a positive integer and let $a\in\mathbb{Z}$ with $\gcd(a,m)=1$. Then for any integers $1\le i<j\le m-1$, we have 
		$$\left\lfloor\frac{aj}{m}\right\rfloor-\left\lfloor\frac{ai}{m}\right\rfloor-\left\lfloor\frac{a(j-i)}{m}\right\rfloor=
		\begin{cases}
			1 & \mbox{if}\ \{a_i\}_m>\{a_j\}_m,\\
			0 & \mbox{otherwise}.
		\end{cases}$$
	\end{lemma}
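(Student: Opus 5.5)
The plan is a direct computation from the division algorithm; no earlier result of the paper is needed. (In the displayed condition, $\{a_i\}_m$ and $\{a_j\}_m$ should be read as $\{ai\}_m$ and $\{aj\}_m$.) For every integer $x$ we have the identity
$$x=m\left\lfloor \frac{x}{m}\right\rfloor+\{x\}_m,\qquad 0\le \{x\}_m<m.$$
This holds for negative $x$ as well, since $\lfloor\cdot\rfloor$ is the true floor. So no sign assumption on $a$ is needed.

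Apply the identity to $x=aj$ and $x=ai$ and subtract. This gives
$$a(j-i)=m\left(\left\lfloor\frac{aj}{m}\right\rfloor-\left\lfloor\frac{ai}{m}\right\rfloor\right)+D,\qquad D:=\{aj\}_m-\{ai\}_m\in(-m,m).$$
Dividing by $m$ and taking the floor, the integer part passes through. Hence
$$\left\lfloor\frac{a(j-i)}{m}\right\rfloor=\left\lfloor\frac{aj}{m}\right\rfloor-\left\lfloor\frac{ai}{m}\right\rfloor+\left\lfloor\frac{D}{m}\right\rfloor .$$
The left-hand side of the lemma therefore equals $-\lfloor D/m\rfloor$, and it remains to evaluate $\lfloor D/m\rfloor$.

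There are two cases, according to the sign of $D$. If $\{ai\}_m\le\{aj\}_m$, then $0\le D<m$, so $\lfloor D/m\rfloor=0$ and the expression is $0$. If $\{ai\}_m>\{aj\}_m$, then $-m<D<0$, so $\lfloor D/m\rfloor=-1$ and the expression is $1$. This is exactly the claimed dichotomy.

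The hypotheses $\gcd(a,m)=1$ and $1\le i<j\le m-1$ are not actually needed for the computation. They only guarantee that $\{ai\}_m\neq\{aj\}_m$, so the two cases are distinguished by a strict inequality. There is no real obstacle here; the only point requiring care is to handle the floor correctly for negative $D$ (and for negative $a$), which the identity above does uniformly.
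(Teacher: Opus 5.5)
Your proof is correct and complete. Writing $x=m\lfloor x/m\rfloor+\{x\}_m$ for $x=ai,aj$ reduces the left-hand side to $-\lfloor D/m\rfloor$ with $D=\{aj\}_m-\{ai\}_m\in(-m,m)$, and this equals $1$ exactly when $\{ai\}_m>\{aj\}_m$ and $0$ otherwise. The paper gives no proof of this lemma at all; it quotes it from Pan, so your self-contained argument supplies the omitted elementary proof. Your remarks are also right: $\{a_i\}_m$ should read $\{ai\}_m$, and the hypotheses $\gcd(a,m)=1$ and $1\le i<j\le m-1$ only exclude the tie $\{ai\}_m=\{aj\}_m$, which the ``otherwise'' branch handles anyway.
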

	
	Using Lemmas \ref{Lem. Gauss lemma for q=3 mod 4}--\ref{Lem. Pan}, we can obtain the following result, which will play a key role in the proof of our first theorem.
	
	\begin{lemma}\label{Lem. key lemma for q=3 mod 4}
		Let $q=2n+1\equiv 3\pmod 4$ be an odd prime power. Then for any integer $a\in (0,q)$ with $\gcd(a,q-1)=1$, we have 
		$$\#\left\{k\in\left(0,\frac{n}{2}\right):\ \frac{n}{2}<\{ak\}_{2n}<\frac{3n}{2}\right\}\equiv 0\pmod{2\mathbb{Z}}.$$
	\end{lemma}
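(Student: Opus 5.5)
The plan is to reduce the count modulo $2n$ to two quantities modulo the odd number $n$, and to show that each has parity equal to the Jacobi symbol $(\frac{a}{n})$, so that they cancel. Throughout, $q\equiv 3\pmod 4$ gives $n=(q-1)/2$ odd. Also $\gcd(a,2n)=1$ forces $a$ to be odd and $\gcd(a,n)=1$. The integers in $(0,n/2)$ are exactly $k=1,\dots,(n-1)/2$.

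\textbf{Step 1 (splitting the condition).} For each such $k$ write $r=\{ak\}_{2n}$ and $r'=\{ak\}_n$. Then $r=r'$ if $\lfloor ak/n\rfloor$ is even, and $r=r'+n$ if it is odd. In the first case $n/2<r<3n/2$ means $r'>n/2$. In the second case it means $r'<n/2$. Since $n$ is odd, $r'\neq n/2$, so the condition $n/2<\{ak\}_{2n}<3n/2$ is equivalent to
$$\left[\{ak\}_n>n/2\right]\ \mathrm{XOR}\ \left[\lfloor ak/n\rfloor \text{ odd}\right].$$
Summing over $k$ gives
$$\#\left\{k\in(0,n/2):\ \tfrac n2<\{ak\}_{2n}<\tfrac{3n}2\right\}\equiv \#\left\{k\in(0,n/2):\ \{ak\}_n>\tfrac n2\right\}+\sum_{0<k<n/2}\left\lfloor\frac{ak}{n}\right\rfloor \pmod 2 .$$

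\textbf{Step 2 (the first term).} Apply Lemma \ref{Lem. Gauss lemma for q=3 mod 4} with $m=n$. It gives $(-1)^{\#\{k:\{ak\}_n>n/2\}}=\left(\frac{a}{n}\right)$.

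\textbf{Step 3 (the second term, an Eisenstein-type lemma).} This is the only step requiring an idea. Lemma \ref{Lem. Gauss lemma for q=3 mod 4} controls $\sum\lfloor 2ak/n\rfloor$ rather than $\sum\lfloor ak/n\rfloor$, so we apply its second part to the auxiliary integer $a'=(a+n)/2$. This is an integer because $a$ and $n$ are both odd, and $\gcd(a',n)=1$ because $2a'\equiv a\pmod n$. Since $2a'k/n=ak/n+k$, we get
$$K_{a'}=\sum_{0<k<n/2}\left\lfloor\frac{ak}{n}\right\rfloor+\frac{n^2-1}{8}.$$
On the other hand, $\left(\frac{a'}{n}\right)=\left(\frac{2}{n}\right)\left(\frac{2a'}{n}\right)=(-1)^{(n^2-1)/8}\left(\frac{a}{n}\right)$. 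Comparing the two expressions yields $(-1)^{\sum\lfloor ak/n\rfloor}=\left(\frac{a}{n}\right)$.

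\textbf{Step 4 (conclusion).} Combining Steps 1--3, $(-1)^{\#}$ of the set in the statement equals $\left(\frac{a}{n}\right)^2=1$. Hence the count is even. Lemma \ref{Lem. Pan} does not appear to be needed for this statement.
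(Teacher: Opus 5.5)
Your proof is correct, and its skeleton matches the paper's. Your two sets $\{k:\{ak\}_n>n/2\}$ and $\{k:\lfloor ak/n\rfloor\ \text{odd}\}=\{k:\{ak\}_{2n}>n\}$ are exactly $X_2(a)\cup X_4(a)$ and $X_3(a)\cup X_4(a)$ in the paper's notation, and your Step 2 is how the paper handles the first of them.

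The two arguments differ in how they get the parity of the second set. The paper proceeds in four steps:
\begin{enumerate}
\item It applies Lemma \ref{Lem. Pan} with modulus $2n$ and $j=n$ to get $\#(X_3(a)\cup X_4(a))=\frac{(n-1)(a-1)}{4}-\sum_{1\le k\le n-1}\lfloor ak/(2n)\rfloor$.
\item It uses a lattice-point count to convert that floor sum into $\sum_{0<k<a/2}\lfloor 2nk/a\rfloor$ plus an even number.
\item It applies Lemma \ref{Lem. Gauss lemma for q=3 mod 4} with modulus $a$, obtaining $(-1)^{(n-1)(a-1)/4}\left(\frac{n}{a}\right)$.
\item It invokes the reciprocity law for Jacobi symbols to cancel this against $\left(\frac{a}{n}\right)$.
\end{enumerate}

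You never leave the modulus $n$. Your substitution $a'=(a+n)/2$ is Eisenstein's trick, and it shows directly that the second set also has parity $\left(\frac{a}{n}\right)$. The only extra input is the supplementary law $\left(\frac{2}{n}\right)=(-1)^{(n^2-1)/8}$.

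What your route buys:
\begin{itemize}
\item It is shorter and needs neither Lemma \ref{Lem. Pan}, the lattice count, nor reciprocity. That lattice count is the heart of Eisenstein's proof of reciprocity, so the paper's detour through the modulus $a$ and back is avoidable.
\item It makes the cancellation transparent: both pieces carry the same symbol $\left(\frac{a}{n}\right)$.
\item Your XOR formulation in Step 1 also makes unnecessary the incongruence $ak_1\not\equiv\pm ak_2\pmod{n}$, which the paper cites when identifying these sets.
\end{itemize}

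In exchange, the paper's route gives an exact floor-sum formula for $\#(X_3(a)\cup X_4(a))$ rather than only its parity. Only the parity is used afterwards.
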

	
	\begin{proof}
		For $j=1,2,3,4$, define the set 
		$$X_j(a)=\left\{k\in\left(0,\frac{n}{2}\right):\ \frac{(j-1)}{2}n<\{ak\}_{2n}<\frac{j}{2}n\right\}.$$
		Clearly $X_i(a)\cap X_j(a)=\emptyset$ for any $1\le i<j\le 4$. 
		
		We first consider $X_2(a)\cup X_4(a)$. For any integers $k_1,k_2\in(0, n/2)$ with $k_1\neq k_2$, one can verify that 
		\begin{equation}\label{Eq. a in the proof of Lem. key lemma for q=3 mod 4}
			ak_1\not\equiv \pm ak_2\pmod{n\mathbb{Z}}.
		\end{equation}
		Note that 
		$$X_1(a)\cup X_3(a)=\left\{k\in\left(0,\frac{n}{2}\right):\ 0<\{ak\}_{2n}<\frac{1}{2}n\ \text{or}\ n<\{ak\}_{2n}<\frac{3}{2}n\right\},$$
		and 
		$$X_2(a)\cup X_4(a)=\left\{k\in\left(0,\frac{n}{2}\right):\ \frac{1}{2}n<\{ak\}_{2n}<n\ \text{or}\ \frac{3}{2}n<\{ak\}_{2n}<2n\right\}.$$
		Thus, by (\ref{Eq. a in the proof of Lem. key lemma for q=3 mod 4}) we obtain 
		$$X_2(a)\cup X_4(a)=\left\{k\in\left(0,\frac{n}{2}\right):\ \{ak\}_n>\frac{1}{2}n\right\}.$$
		By Lemma \ref{Lem. Gauss lemma for q=3 mod 4}, we have 
		\begin{equation}\label{Eq. b in the proof of Lem. key lemma for q=3 mod 4}
			(-1)^{\# X_2(a)\cup X_4(a)}=(-1)^{\# X_2(a)+\# X_4(a)}=\left(\frac{a}{n}\right).
		\end{equation}
		
		Next we consider $X_3(a)\cup X_4(a)$. Note that $\{an\}_{2n}=n$. Hence, by (\ref{Eq. a in the proof of Lem. key lemma for q=3 mod 4}) we obtain 
		$$X_3(a)\cup X_4(a)=\left\{k\in\left(0,\frac{n}{2}\right):\ \{ak\}_{2n}>\{an\}_{2n}=n\right\}.$$
		Since $a$ and $n$ are both odd, applying Lemma \ref{Lem. Pan}, one can verify that 
		\begin{align}\label{Eq. c in the proof of Lem. key lemma for q=3 mod 4}
			\# X_3(a)\cup X_4(a)
			&=\sum_{0<k<n/2}\left(\left\lfloor\frac{an}{2n}\right\rfloor-\left\lfloor\frac{ak}{2n}\right\rfloor-\left\lfloor\frac{a(n-k)}{2n}\right\rfloor\right)\notag\\
			&=\frac{(n-1)(a-1)}{4}-\sum_{0<k<n/2}\left(\left\lfloor\frac{ak}{2n}\right\rfloor+\left\lfloor\frac{a(n-k)}{2n}\right\rfloor\right)\notag\\
			&=\frac{(n-1)(a-1)}{4}-\sum_{1\le k\le n-1}\left\lfloor\frac{ak}{2n}\right\rfloor.
		\end{align}
		Note that 
		\begin{align*}
			\sum_{1\le k\le n-1}\left\lfloor\frac{ak}{2n}\right\rfloor+\sum_{1\le k\le (a-1)/2}\left\lfloor\frac{2nk}{a}\right\rfloor
			&=\#\left\{(x,y)\in\mathbb{Z}\times\mathbb{Z}:\ 0<x<n\ \text{and}\ 0<y<a/2\right\}\\
			&=\frac{(n-1)(a-1)}{2}.
		\end{align*}
		Combining this with (\ref{Eq. c in the proof of Lem. key lemma for q=3 mod 4}) and noting that $(n-1)(a-1)/2$ is even, we obtain 
		$$\# X_3(a)+\# X_4(a)\equiv \frac{(n-1)(a-1)}{4}+\sum_{1\le k\le (a-1)/2}\left\lfloor\frac{2nk}{a}\right\rfloor\pmod{2\mathbb{Z}}.$$
		Thus, using Lemma \ref{Lem. Gauss lemma for q=3 mod 4}, we have 
		\begin{equation}\label{Eq. d in the proof of Lem. key lemma for q=3 mod 4}
			(-1)^{\# X_3(a)+\# X_4(a)}=(-1)^{(n-1)(a-1)/4}\cdot\left(\frac{n}{a}\right).
		\end{equation}
		
		Assembling (\ref{Eq. b in the proof of Lem. key lemma for q=3 mod 4}) and (\ref{Eq. d in the proof of Lem. key lemma for q=3 mod 4}), we obtain 
		\begin{align*}
			(-1)^{\# X_2(a)+\# X_3(a)}
			&=(-1)^{\# X_2(a)+\# X_4(a)}\cdot (-1)^{\# X_3(a)+\# X_4(a)}\\
			&=\left(\frac{a}{n}\right)\cdot (-1)^{(n-1)(a-1)/4}\cdot\left(\frac{n}{a}\right)\\
			&=1.
		\end{align*}
		Thus, 
		$$\# X_2(a)\cup X_3(a)=\# X_2(a)+\# X_3(a) \equiv 0\pmod{2\mathbb{Z}}.$$
		
		In view of the above, we have completed the proof.
	\end{proof}
	
	We also need the following result (cf. \cite[Lemma 2.8]{Wu and Pan}).
	
	\begin{lemma}\label{Lem. transform of Jacobi sums}
		Let $q=2n+1$ be odd prime power and let $\chi_q$ be a generator of $\widehat{\mathbb{F}_q^{\times}}$. Then for any $k\in\mathbb{Z}$, we have 
		$$J_q(\phi_q,\chi_q^k)=\phi_q(-1)\cdot J_q(\phi_q,\phi_q\chi_q^{-k}).$$
	\end{lemma}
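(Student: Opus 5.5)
We prove the identity $J_q(\phi_q,\chi_q^k)=\phi_q(-1)J_q(\phi_q,\phi_q\chi_q^{-k})$ by a direct change of variables in the defining sum. Throughout, $\phi_q$ is viewed as a power of $\chi_q$, namely $\phi_q=\chi_q^{n}$, and the convention $\psi(0)=0$ is kept for every character $\psi$, including the trivial one.

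Start from
$$J_q(\phi_q,\chi_q^k)=\sum_{x\in\mathbb{F}_q}\phi_q(x)\chi_q^k(1-x).$$
The term $x=0$ vanishes, and so does $x=1$ (since $\chi_q^k(0)=0$ even when $\chi_q^k$ is trivial), so the sum runs over $x\in\mathbb{F}_q\setminus\{0,1\}$. On this set, substitute $x=1/y$, i.e. $y=1/x$. This is a bijection of $\mathbb{F}_q\setminus\{0,1\}$ onto itself, and we have $1-x=(y-1)/y$. Using $\phi_q(1/y)=\phi_q(y)$, the summand becomes
$$\phi_q(y)\,\chi_q^{k}(y-1)\,\chi_q^{-k}(y)=\phi_q(-1)\,(\phi_q\chi_q^{-k})(y)\,\chi_q^k(1-y),$$
where we used $\chi_q^k(y-1)=\chi_q^k(-1)\chi_q^k(1-y)$.

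This produces the factor $\chi_q^k(-1)=(-1)^k$ rather than $\phi_q(-1)$, so a second step is needed. Apply the symmetry $x\mapsto 1-x$, which gives $J_q(\chi,\psi)=J_q(\psi,\chi)$, and then treat the sum in a Pasternak-like way. The cleaner route is to substitute $x\mapsto x/(x-1)$ directly. Set $y=x/(x-1)$, which is an involution of $\mathbb{F}_q\setminus\{0,1\}$. Then $1-y=-1/(x-1)=1/(1-x)$, and also $x=y/(y-1)$, so $1-x=-1/(y-1)=1/(1-y)$.

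Now compute
$$\phi_q(x)\chi_q^k(1-x)=\phi_q(y)\phi_q(y-1)\,\chi_q^{-k}(1-y)=\phi_q(-1)\,\phi_q(y)\,(\phi_q\chi_q^{-k})(1-y).$$
Here we used $\phi_q(y-1)=\phi_q(-1)\phi_q(1-y)$ and $\phi_q^{-1}=\phi_q$. Summing over $y\in\mathbb{F}_q\setminus\{0,1\}$ gives exactly $\phi_q(-1)J_q(\phi_q,\phi_q\chi_q^{-k})$.

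It remains to check the excluded points $y=0$ and $y=1$ in this last sum. At $y=0$, the factor $\phi_q(0)=0$ kills the term. At $y=1$, the factor $(\phi_q\chi_q^{-k})(0)=0$ kills the term; this holds even when $\phi_q\chi_q^{-k}$ is trivial (i.e. $k\equiv n \bmod 2n$), by the convention $\psi(0)=0$. Hence the identity holds for every $k\in\mathbb{Z}$.

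The only delicate point is handling the endpoints $x,y\in\{0,1\}$ consistently with the convention for trivial characters; beyond that the argument is routine.
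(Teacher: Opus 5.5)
Your final argument is correct and complete. The involution $y=x/(x-1)$ of $\mathbb{F}_q\setminus\{0,1\}$ gives $1-x=1/(1-y)$ and $\phi_q(x)=\phi_q(y)\phi_q(y-1)$. So the summand $\phi_q(x)\chi_q^k(1-x)$ becomes $\phi_q(-1)\,\phi_q(y)\,(\phi_q\chi_q^{-k})(1-y)$. The excluded terms at $0$ and $1$ vanish on both sides by the convention $\psi(0)=0$ for every character, including the trivial one.

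The paper gives no proof of this lemma; it only cites Lemma 2.8 of Wu and Pan, so there is no internal argument to compare with. The usual alternative is via Gauss sums. One writes $J_q(\phi_q,\psi)=G(\phi_q)G(\psi)/G(\phi_q\psi)$ and uses $G(\psi)G(\overline{\psi})=\psi(-1)q$, which gives the ratio $\chi_q^k(-1)/(\phi_q\chi_q^k)(-1)=\phi_q(-1)$ at once. That route is only valid when $\chi_q^k$ and $\phi_q\chi_q^k$ are both nontrivial. The cases $k\equiv 0,n \pmod{2n}$ then need a separate check, using $J_q(\phi_q,\chi_q^0)=-1$ and $J_q(\phi_q,\phi_q)=-\phi_q(-1)$. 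Your substitution handles every $k$ uniformly. You also correctly identified that the convention at $0$ is essential: if the trivial character took the value $1$ at $0$, the case $k=0$ would read $0=-1$.

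Two cleanups in the write-up:
\begin{itemize}
\item Delete the abandoned first attempt with $x=1/y$. Its display writes $\phi_q(-1)$ where the computation actually produces $\chi_q^k(-1)=(-1)^k$, as your next sentence admits.
\item Delete the sentence about treating the sum in a ``Pasternak-like way'', which has no mathematical content.
\end{itemize}
Neither piece is needed; the $x\mapsto x/(x-1)$ computation alone proves the lemma.
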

	
	Now we are in a position to prove our first theorem.
	
	{\noindent\bfseries Proof of Theorem \ref{Thm. Rq is an integer}}. Recall that $q=2n+1$ is an odd prime power and that $\chi_q$ is a generator of $\widehat{\mathbb{F}_q^{\times}}$. Let $\zeta_{q-1}$ be a primitive $(q-1)$-th root of unity. For any $a\in\mathbb{Z}$ with $\gcd(a,q-1)=1$, define the $\mathbb{Q}$-automorphism $\sigma_a$ of $\mathbb{Q}(\zeta_{q-1})$ by $\sigma_a(\zeta_{q-1})=\zeta_{q-1}^a$. It is known that 
	$$\Gal\left(\mathbb{Q}(\zeta_{q-1})/\mathbb{Q}\right)=\left\{\sigma_a:\ 1\le a\le q-1\ \text{and}\ \gcd(a,q-1)=1\right\}.$$
	
	Let $\sigma_a\in\Gal(\mathbb{Q}(\zeta_{q-1})/\mathbb{Q})$ be an arbitrary automorphism, where $1\le a\le q-1$ and $\gcd(a,q-1)=1$. Set 
	$$X_j(a)=\left\{k\in\left(0,\frac{n}{2}\right):\ \frac{(j-1)}{2}n<\{ak\}_{2n}<\frac{j}{2}n\right\}$$
	for $j=1,2,3,4$. 
	
	(i) We first consider the case $q\equiv 3\pmod 4$, i.e., $n\equiv 1\pmod 2$. 
	
	Note that $\chi_q^{\pm n}=\phi_q$. By Lemma \ref{Lem. transform of Jacobi sums} one can verify that 
	\begin{align}\label{Eq. a in the proof of global result for q=3 mod 4}
		&\prod_{k\in X_2(a)}\left(J_q(\phi_q,\chi_q^{ak})+J_q(\phi_q,\chi_q^{-ak})\right)\notag\\
		=&\prod_{k\in X_2(a)}\left(J_q(\phi_q,\chi_q^{\{ak\}_{2n}})+J_q(\phi_q,\chi_q^{-\{ak\}_{2n}})\right)\notag\\
		=&(-1)^{\# X_2(a)}\cdot \prod_{k\in X_2(a)}\left(J_q(\phi_q,\chi_q^{n-\{ak\}_{2n}})+J_q(\phi_q,\chi_q^{-n+\{ak\}_{2n}})\right).
	\end{align}
	
	For $X_3(a)$, applying Lemma \ref{Lem. transform of Jacobi sums}, we obtain 
	\begin{align}\label{Eq. b in the proof of global result for q=3 mod 4}
		&\prod_{k\in X_3(a)}\left(J_q(\phi_q,\chi_q^{ak})+J_q(\phi_q,\chi_q^{-ak})\right)\notag\\
		=&\prod_{k\in X_3(a)}\left(J_q(\phi_q,\chi_q^{\{ak\}_{2n}})+J_q(\phi_q,\chi_q^{-\{ak\}_{2n}})\right)\notag\\
		=&(-1)^{\# X_3(a)}\cdot \prod_{k\in X_3(a)}\left(J_q(\phi_q,\chi_q^{-n+\{ak\}_{2n}})+J_q(\phi_q,\chi_q^{n-\{ak\}_{2n}})\right).
	\end{align}
	
	Using Lemma \ref{Lem. transform of Jacobi sums} again, we see that 
	\begin{align}\label{Eq. c in the proof of global result for q=3 mod 4}
		&\prod_{k\in X_4(a)}\left(J_q(\phi_q,\chi_q^{ak})+J_q(\phi_q,\chi_q^{-ak})\right)\notag\\
		=&\prod_{k\in X_4(a)}\left(J_q(\phi_q,\chi_q^{\{ak\}_{2n}})+J_q(\phi_q,\chi_q^{-\{ak\}_{2n}})\right)\notag\\
		=&\prod_{k\in X_4(a)}\left(J_q(\phi_q,\chi_q^{2n-\{ak\}_{2n}})+J_q(\phi_q,\chi_q^{-2n+\{ak\}_{2n}})\right).
	\end{align}
	
	Set 
	\begin{align*}
		U_1(a)&=\left\{\{ak\}_{2n}:\ k\in X_1(a)\right\},\\
		U_2(a)&=\left\{n-\{ak\}_{2n}:\ k\in X_2(a)\right\},\\
		U_3(a)&=\left\{-n+\{ak\}_{2n}:\ k\in X_3(a)\right\},\\
		U_4(a)&=\left\{2n-\{ak\}_{2n}:\ k\in X_4(a)\right\}.
	\end{align*}
	Since $ak_1\not\equiv \pm ak_2 \pmod{n\mathbb{Z}}$ for any $0<k_1\neq k_2<n/2$, we see that $U_i\cap U_j=\emptyset$ for any  $1\le i<j\le 4$ and 
	$$\bigcup_{1\le i\le 4}U_i(a)=\left\{1,2,\cdots,\frac{n-1}{2}\right\}.$$
	From this and (\ref{Eq. a in the proof of global result for q=3 mod 4})--(\ref{Eq. c in the proof of global result for q=3 mod 4}), we obtain 
	\begin{align*}
		\sigma_a(R_q(\chi_q))
		&=\prod_{0<k<n/2}\left(J_q(\phi_q,\chi_q^{ak})+J_q(\phi_q,\chi_q^{-ak})\right)\\
		&=\prod_{1\le j\le 4}\prod_{k\in X_j(a)}\left(J_q(\phi_q,\chi_q^{\{ak\}_{2n}})+J_q(\phi_q,\chi_q^{-\{ak\}_{2n}})\right)\\
		&=(-1)^{\# X_2(a)+\# X_3(a)}\prod_{1\le j\le 4}\prod_{k\in U_j(a)}\left(J_q(\phi_q,\chi_q^{k})+J_q(\phi_q,\chi_q^{-k})\right)\\
		&=(-1)^{\# X_2(a)+\# X_3(a)}\prod_{0<k<n/2}\left(J_q(\phi_q,\chi_q^{k})+J_q(\phi_q,\chi_q^{-k})\right)\\
		&=(-1)^{\# X_2(a)+\# X_3(a)}\cdot R_q(\chi_q).
	\end{align*}
	By Lemma \ref{Lem. key lemma for q=3 mod 4}, we see that  
	$$\# X_2(a)+\# X_3(a)=\#\left\{k\in\left(0,\frac{n}{2}\right):\ \frac{n}{2}<\{ak\}_{2n}<\frac{3n}{2}\right\}\equiv 0\pmod{2\mathbb{Z}}.$$
	Thus, by the above results, we immediately obtain 
	$$\sigma_a(R_q(\chi_q))=R_q(\chi_q)$$
	for any $\sigma_a\in\Gal(\mathbb{Q}(\zeta_{q-1})/\mathbb{Q})$. By the Galois correspondence, we have $R_q(\chi_q)\in\mathbb{Q}$. Also, it is known that 
	$$\{\chi_q^s:\ 1\le s\le q-1\ \text{and}\ \gcd(s,q-1)=1\}$$
	is exactly the set of all generators of $\widehat{\mathbb{F}_q^{\times}}$. Hence, for any generator $\chi_q^s$ of $\widehat{\mathbb{F}_q^{\times}}$, where $1\le s\le q-1$ and $\gcd(s,q-1)=1$, we see that 
	$$R_q(\chi_q^s)=\prod_{0<k<n/2}\left(J_q(\phi_q,\chi_q^s)+J_q(\phi_q,\chi_q^{-s})\right)=\sigma_s(R_q(\chi_q))=R_q(\chi_q).$$
	This implies that $R_q(\chi_q)$ is independent of the choice of the generator $\chi_q$ of $\widehat{\mathbb{F}_q^{\times}}$. Moreover, for any integer $k\in (0,n/2)$, by Lemma \ref{Lem. eigenvalues of Aq} we see that 
	$$\frac{(-1)^k}{2}\left(J_q(\phi_q,\chi_q^k)+J_q(\phi_q,\chi_q^{-k})\right)$$
	is an algebraic integer. Thus, $$R_q(\chi_q)/2^{\lfloor n/2\rfloor}=R_q(\chi_q)/2^{(n-1)/2}\in\mathbb{Z}[\zeta_{q-1}]\cap\mathbb{Q}=\mathbb{Z},$$
	that is, $x_q$ and $R_q(\chi_q)$ are both rational integers. 
	
	(ii) Now we consider the case $q\equiv 1\pmod 4$, i.e., $n\equiv 0\pmod 2$.
	
	Let notations be as above. Applying Lemma \ref{Lem. transform of Jacobi sums}, we obtain 
	\begin{align}\label{Eq. a in the proof of Thm. global result for q=1 mod 4}
		&\prod_{k\in X_2(a)}\left(J_q(\phi_q,\chi_q^{ak})+J_q(\phi_q,\chi_q^{-ak})\right)\notag\\
		=&\prod_{k\in X_2(a)}\left(J_q(\phi_q,\chi_q^{\{ak\}_{2n}})+J_q(\phi_q,\chi_q^{-\{ak\}_{2n}})\right)\notag\\
		=&\prod_{k\in X_2(a)}\left(J_q(\phi_q,\chi_q^{n-\{ak\}_{2n}})+J_q(\phi_q,\chi_q^{-n+\{ak\}_{2n}})\right),
	\end{align}
	\begin{align}\label{Eq. b in the proof of Thm. global result for q=1 mod 4}
		&\prod_{k\in X_3(a)}\left(J_q(\phi_q,\chi_q^{ak})+J_q(\phi_q,\chi_q^{-ak})\right)\notag\\
		=&\prod_{k\in X_3(a)}\left(J_q(\phi_q,\chi_q^{\{ak\}_{2n}})+J_q(\phi_q,\chi_q^{-\{ak\}_{2n}})\right)\notag\\
		=&\prod_{k\in X_3(a)}\left(J_q(\phi_q,\chi_q^{-n+\{ak\}_{2n}})+J_q(\phi_q,\chi_q^{n-\{ak\}_{2n}})\right),
	\end{align}
	and
	\begin{align}\label{Eq. c in the proof of global result for q=1 mod 4}
		&\prod_{k\in X_4(a)}\left(J_q(\phi_q,\chi_q^{ak})+J_q(\phi_q,\chi_q^{-ak})\right)\notag\\
		=&\prod_{k\in X_4(a)}\left(J_q(\phi_q,\chi_q^{\{ak\}_{2n}})+J_q(\phi_q,\chi_q^{-\{ak\}_{2n}})\right)\notag\\
		=&\prod_{k\in X_4(a)}\left(J_q(\phi_q,\chi_q^{2n-\{ak\}_{2n}})+J_q(\phi_q,\chi_q^{-2n+\{ak\}_{2n}})\right).
	\end{align}
	
	Note again that $ak_1\not\equiv \pm ak_2 \pmod{n\mathbb{Z}}$ for any $0<k_1\neq k_2<n/2$. Combining this with (\ref{Eq. a in the proof of Thm. global result for q=1 mod 4})--(\ref{Eq. c in the proof of global result for q=1 mod 4}), for any $\sigma_a\in\Gal(\mathbb{Q}(\zeta_{q-1})/\mathbb{Q})$, one can verify that 
	\begin{align*}
		\sigma_a(R_q(\chi_q))
		&=\prod_{0<k<n/2}\left(J_q(\phi_q,\chi_q^{ak})+J_q(\phi_q,\chi_q^{-ak})\right)\\
		&=\prod_{1\le j\le 4}\prod_{k\in X_j(a)}\left(J_q(\phi_q,\chi_q^{\{ak\}_{2n}})+J_q(\phi_q,\chi_q^{-\{ak\}_{2n}})\right)\\
		&=\prod_{1\le j\le 4}\prod_{k\in U_j(a)}\left(J_q(\phi_q,\chi_q^{k})+J_q(\phi_q,\chi_q^{-k})\right)\\
		&=\prod_{0<k<n/2}\left(J_q(\phi_q,\chi_q^{k})+J_q(\phi_q,\chi_q^{-k})\right)\\
		&=R_q(\chi_q)
	\end{align*}
	
	Hence, $R_q(\chi_q)\in\mathbb{Q}$ by the Galois correspondence. By the same method used in the case $q\equiv 3\pmod 4$, it is easy to see that $R_q(\chi_q)/2^{\lfloor n/2\rfloor}=R_q(\chi_q)/2^{(n-2)/2}\in\mathbb{Z}$, and that $R_q(\chi_q)$ is independent of the choice of the generator $\chi_q$ of $\widehat{\mathbb{F}_q^{\times}}$.
	
	In view of the above, we have completed the proof.\qed 
	
	\section{Proof of Theorem \ref{Thm. xq and det Aq}}
	\setcounter{lemma}{0}
	\setcounter{theorem}{0}
	\setcounter{equation}{0}
	\setcounter{conjecture}{0}
	\setcounter{remark}{0}
	\setcounter{corollary}{0}
	
	{\noindent\bfseries Proof of Theorem \ref{Thm. xq and det Aq}}. Recall that $q=2n+1$ be an odd prime power. Let $\chi_q$ be a generator of $\widehat{\mathbb{F}_q^{\times}}$. 
	
	(i) We first consider the case $q=2n+1\equiv 3\pmod 4$, i.e., $n\equiv 1\pmod 2$. 
	
	Note that $J_q(\phi_q,\chi_q^n)+J_q(\phi_q,\chi_q^{-n})=2$ when $q\equiv3\pmod 4$. Since $n$ is odd, applying Lemma \ref{Lem. eigenvalues of Aq} and Lemma \ref{Lem. transform of Jacobi sums}, one can verify that 
	\begin{align*}
		\det A_q
		&=\prod_{1\le k\le n}\frac{(-1)^k}{2}\left(J_q(\phi_q,\chi_q^k)+J_q(\phi_q,\chi_q^{-k})\right)\\
		&=\frac{(-1)^{n(n+1)/2}}{2^{n-1}}\cdot R_q(\chi_q)\cdot\prod_{n/2<k<n}\left(J_q(\phi_q,\chi_q^k)+J_q(\phi_q,\chi_q^{-k})\right)\\
		&=\frac{(-1)^{(n+1)/2}}{2^{n-1}}\cdot R_q(\chi_q)\cdot \prod_{0<k<n/2}\left(J_q(\phi_q,\chi_q^{n-k})+J_q(\phi_q,\chi_q^{-n+k})\right)\\
		&=-\frac{1}{2^{n-1}}\cdot R_q(\chi_q)^2\\
		&=-x_q^2,
	\end{align*}
	where $x_q=R_q/2^{\lfloor n/2\rfloor}\in\mathbb{Z}$. 
	
	(ii) Next we focus on the case $q\equiv 1\pmod 4$, i.e., $n\equiv 0\pmod 2$. 
	
	Observe that $J_q(\phi_q,\chi_q^n)+J_q(\phi_q,\chi_q^{-n})=-2$ when $q\equiv1\pmod 4$. In addition, for $n/2$, by Lemma \ref{Lem. eigenvalues of Aq} we have 
	\begin{align*}
	 \frac{(-1)^{n/2}}{2}\left(J_q(\phi_q,\chi_q^{n/2})+J_q(\phi_q,\chi_q^{-n/2})\right)
&=\sum_{x\in\mathcal{S}_q}\phi_q(1+x)\chi_q^{n/2}(x)\\
&=\frac{1}{2}\sum_{x\in\mathbb{F}_q}\phi_q(1+x^2)\chi_q^{n/2}(x^2)\\
&=\frac{1}{2}\sum_{x\in\mathbb{F}_q}\phi_q(x^3+x)\\
&=-\frac{1}{2}a_q(E),
	\end{align*}
where $a_q(E)$ is defined by (\ref{Eq. definition of trace of Frobenius}). Combining the above results with Lemma \ref{Lem. eigenvalues of Aq} and Lemma \ref{Lem. transform of Jacobi sums}, one can verify that 
\begin{align*}
	\det A_q
&=\prod_{1\le k\le n}\frac{(-1)^k}{2}\left(J_q(\phi_q,\chi_q^k)+J_q(\phi_q,\chi_q^{-k})\right)\\
&=\frac{(-1)^{n(n+1)/2}}{2^n}\cdot (-2)\cdot\prod_{0<k<n}\left(J_q(\phi_q,\chi_q^k)+J_q(\phi_q,\chi_q^{-k})\right)\\
&=\frac{1}{2^{n-2}}\cdot \frac{1}{2}a_q(E)\cdot\prod_{k\in (0, n)\setminus\{n/2\}}\left(J_q(\phi_q,\chi_q^k)+J_q(\phi_q,\chi_q^{-k})\right)\\
&=\frac{1}{2^{n-2}}\cdot \frac{1}{2}a_q(E)\cdot R_q(\chi_q)\cdot \prod_{k\in (0, n/2)}\left(J_q(\phi_q,\chi_q^{n-k})+J_q(\phi_q,\chi_q^{-n+k})\right)\\
&=\frac{1}{2^{n-2}}\cdot \frac{1}{2}a_q(E)\cdot R_q(\chi_q)^2\\
&=\frac{1}{2}a_q(E)\cdot x_q^2.
\end{align*}
	
(iii) Finally, suppose that $q=p\equiv 1\pmod 4$ is a prime with $p=c_p^2+4d_p^2$ with $c_p,d_p\in\mathbb{Z}$ and $c_p\equiv 1\pmod 4$. Then it is known that (cf. \cite[Theorem 6.2.9]{BEW}) 
	$$\sum_{0<x<p/2}\phi_p(x^3+x)=-c_p.$$
Since $p\equiv 1\pmod 4$, the above equality implies that 
$$\frac{1}{2}a_p(E)=-\frac{1}{2}\sum_{x\in\mathbb{F}_p}\phi_p(x^3+x)=-\sum_{0<x<p/2}\phi_p(x^3+x)=c_p.$$
As $c_p\neq 0$, applying the above result to $\det A_p$, we obtain 
$$\frac{\det A_p}{c_p}=x_p^2.$$
	
In view of the above, we  have completed the proof. \qed 
	
\section{Proof of Theorem \ref{Thm. local properties of Rq}}
\setcounter{lemma}{0}
\setcounter{theorem}{0}
\setcounter{equation}{0}
\setcounter{conjecture}{0}
\setcounter{remark}{0}
\setcounter{corollary}{0}
	
Recall that $q=p^f$ is an odd prime power. Throughout this section, we always view the Jacobi sum $J_q(\chi_q^i,\chi_q^j)$ as an element of the local field $\mathbb{Q}_p(\zeta_{q-1})$ for any $i,j\in\mathbb{Z}$. 

In this section, we study the local properties of $R_q$. We first introduce the Teich\"{u}muller character $\omega_q$ of $\mathbb{F}_q$. Let $\zeta_{q-1}\in\mathbb{Q}_p^{\alg}$ be a primitive $(q-1)$-th root of unity. It is known that the extension $\mathbb{Q}_p(\zeta_{q-1})/\mathbb{Q}_p$ is unramified with $[\mathbb{Q}_p(\zeta_{q-1}):\mathbb{Q}_p]=f$. Also, the integral closure of $\mathbb{Z}_p$ in $\mathbb{Q}_p(\zeta_{q-1})$ is $\mathbb{Z}_p[\zeta_{q-1}]$. Thus, 
$$\mathbb{Z}_p[\zeta_{q-1}]/\mathfrak{p}\cong\mathbb{F}_q,$$
where $\mathfrak{p}=p\mathbb{Z}_p[\zeta_{q-1}]$ is a prime ideal. Throughout this section, we identify $\mathbb{F}_q$ with $\mathbb{Z}_p[\zeta_{q-1}]/\mathfrak{p}$. The Teich\"{u}muller character $\omega_q: \mathbb{F}_q\rightarrow \mathbb{Q}_p(\zeta_{q-1})$ is a multiplicative character of $\mathbb{F}_q$ defined by 
$$\omega_q(x\mod{\mathfrak{p}})\equiv x\pmod{\mathfrak{p}}$$
for any $x\in \mathbb{Z}_p[\zeta_{q-1}]$. Let $\mu_{q-1}(\mathbb{Q}_p^{\alg})=\{\zeta_{q-1}^k:\ k\in\mathbb{Z}\}$. Noting that 
$$\mathbb{F}_q^{\times}=\mu_{q-1}(\mathbb{F}_q^{\alg})=\left\{x\in\mathbb{F}_q^{\alg}:\ x^{q-1}=1\right\},$$
one can verify that the map $\pi_{\mathfrak{p}}: \mu_{q-1}(\mathbb{Q}_p^{\alg})\rightarrow \mathbb{F}_q^{\times}$, defined by $\pi_{\mathfrak{p}}(x)=x\mod{\mathfrak{p}}$, is a group isomorphism with 
$$\pi_{\mathfrak{p}}\circ \omega_q=\id_{\mathbb{F}_q^{\times}}.$$
Thus, $\omega_q$ is an isomorphism from $\mathbb{F}_q^{\times}$ to $\mu_{q-1}(\mathbb{Q}_p^{\alg})$. This implies that $\omega_q$ is a generator of $\widehat{\mathbb{F}_q^{\times}}$. 
	
Now we state our first lemma (cf. \cite[Proposition 3.6.4]{Cohen}).

\begin{lemma}\label{Lem. Cohen}
	Let notations be as above. Then for any integer $1\le i,j\le q-2$, we have 
	$$J_q(\omega_q^{-i},\omega_q^{-j})\equiv -\binom{i+j}{i}\pmod{\mathfrak{p}}.$$
	Moreover, if $i+j\ge q$, then 
	$$J_q(\omega_q^{-i},\omega_q^{-j})\equiv 0\pmod{\mathfrak{p}}.$$
\end{lemma}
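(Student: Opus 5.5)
We prove the congruence from the Teichmüller expansion of the Jacobi sum. Write $\omega=\omega_q$ and reduce $J_q(\omega^{-i},\omega^{-j})$ modulo $\mathfrak{p}$ using $\omega(x)\equiv x$.

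\emph{Step 1: reduction to a power sum.} For $1\le i,j\le q-2$ the terms with $x=0$ or $x=1$ vanish, so
$$J_q(\omega^{-i},\omega^{-j})=\sum_{x\in\mathbb{F}_q\setminus\{0,1\}}\omega^{-i}(x)\,\omega^{-j}(1-x).$$
Since $\omega^{-i}(x)=\omega(x)^{q-1-i}$ for $x\neq 0$, and $q-1-i\ge 1$, this gives
$$J_q(\omega^{-i},\omega^{-j})\equiv \sum_{x\in\mathbb{F}_q} x^{q-1-i}(1-x)^{q-1-j}\pmod{\mathfrak{p}},$$
with the convention $0^m=0$ for $m\ge1$. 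This convention matches the values at $x=0,1$, because both exponents are positive.

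\emph{Step 2: expand and use orthogonality of power sums.} Expand $(1-x)^{q-1-j}$ binomially. Recall that $\sum_{x\in\mathbb{F}_q}x^m$ equals $-1$ if $m\ge1$ and $(q-1)\mid m$, and equals $0$ otherwise (for $m\ge 0$, with $0^0=1$). The exponents occurring are
$$m=q-1-i+t,\qquad 0\le t\le q-1-j.$$
Since $q-1-i\ge 1$, every such $m$ is positive. Moreover $m<2(q-1)$, so only $m=q-1$, i.e.\ $t=i$, can contribute, and this requires $i\le q-1-j$. Hence
$$J_q(\omega^{-i},\omega^{-j})\equiv -(-1)^i\binom{q-1-j}{i}\pmod{\mathfrak{p}}\quad\text{if } i+j\le q-1,$$
and $J_q(\omega^{-i},\omega^{-j})\equiv 0\pmod{\mathfrak{p}}$ if $i+j\ge q$. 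This already settles the second assertion.

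\emph{Step 3: the binomial identity.} For $i+j\le q-1$ it remains to show
$$(-1)^i\binom{q-1-j}{i}\equiv\binom{i+j}{i}\pmod p.$$
Since $q\equiv 0\pmod p$, we have $q-1-j\equiv -1-j$ as elements of $\mathbb{Z}_p$. The polynomial identity
$$\binom{-1-j}{i}=(-1)^i\binom{i+j}{i}$$
then gives the claim, because $\binom{X}{i}$ has $p$-integral coefficients when evaluated on $\mathbb{Z}_p$: it maps $\mathbb{Z}$ to $\mathbb{Z}$, so by continuity it maps $\mathbb{Z}_p$ to $\mathbb{Z}_p$ and respects congruences mod $p$.

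The main obstacle is justifying this step cleanly, since $i!$ may be divisible by $p$. I would argue via the continuity of $\binom{X}{i}$ as just described, or alternatively by comparing coefficients in $(1+T)^{q-1-j}\equiv(1+T)^{-1-j}$ in $\mathbb{F}_p[[T]]$, which holds because $(1+T)^q=1+T^q$. In the boundary case $i+j=q-1$, the identity gives $\binom{q-1}{i}\equiv(-1)^i$, which is consistent with Step 2.
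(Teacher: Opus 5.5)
Your Steps 1 and 2 are correct. This is the standard argument: the paper gives no proof of its own and only cites Cohen's Proposition 3.6.4, which rests on exactly this reduction to power sums. The problems are in Step 3.

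First, the continuity justification is false as stated. The polynomial $\binom{X}{i}$ does map $\mathbb{Z}_p$ into $\mathbb{Z}_p$, but it does \emph{not} respect congruences modulo $p$ once $i\ge p$. For example, $p\equiv 0\pmod p$ while $\binom{p}{p}=1\not\equiv 0=\binom{0}{p}$. So knowing $q-1-j\equiv -1-j\pmod p$ is not enough. The correct statement is that $\binom{a}{i}\bmod p$ depends only on $a \bmod p^N$ whenever $0\le i<p^N$ (Lucas). You must therefore use $q-1-j\equiv -1-j\pmod{q}$ together with $i\le q-2<q$. Your power-series alternative is the clean fix, but state it accurately. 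In $\mathbb{F}_p[[T]]$ one has $(1+T)^{q-1-j}=(1+T)^{-1-j}(1+T^q)$, so the two series agree only modulo $T^q$, not as elements of $\mathbb{F}_p[[T]]$. This suffices precisely because $i<q$.

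Second, the first congruence is asserted for all $1\le i,j\le q-2$, but you verify it only when $i+j\le q-1$. For $i+j\ge q$ you show $J_q(\omega_q^{-i},\omega_q^{-j})\equiv 0$, but not that $\binom{i+j}{i}\equiv 0\pmod p$, which is needed for the literal statement. The same coefficient comparison closes this. For $q-1-j<i<q$ the coefficient of $T^i$ in $(1+T)^{q-1-j}$ is $0$, while in $(1+T)^{-1-j}$ it is $(-1)^i\binom{i+j}{i}$. Since the two series agree modulo $T^q$, this gives $\binom{i+j}{i}\equiv 0\pmod p$. Equivalently, by Kummer's theorem, adding $i,j<p^f$ with $i+j\ge p^f$ forces at least one carry in base $p$. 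With these two repairs the proof is complete.
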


We also need the following result (see \cite[Theorem 2.1.4]{BEW}).

\begin{lemma}\label{Lem. BEW}
	Let notations be as above. For any integer $k$ with $k\not\equiv 0\pmod{q-1}$, we have 
	$$J_q(\phi_q,\omega_q^{-k})=\omega_q^{-k}(4)\cdot J_q(\omega_q^{-k},\omega_q^{-k}).$$
\end{lemma}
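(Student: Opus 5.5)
The plan is to prove the identity for an arbitrary nontrivial multiplicative character $\psi$ of $\mathbb{F}_q$ and then specialize to $\psi=\omega_q^{-k}$. That character is nontrivial precisely because $k\not\equiv 0\pmod{q-1}$ and $\omega_q$ is a generator of $\widehat{\mathbb{F}_q^{\times}}$. The general claim is
$$J_q(\phi_q,\psi)=\psi(4)\,J_q(\psi,\psi).$$
The argument is a direct change of variables in the Jacobi sum $J_q(\psi,\psi)$. It uses the fact that $q$ is odd, so $2$ and $4$ are invertible in $\mathbb{F}_q$.

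\textbf{Step 1: rewrite $J_q(\psi,\psi)$ as a sum over $t$.} By definition and multiplicativity,
$$J_q(\psi,\psi)=\sum_{x\in\mathbb{F}_q}\psi\bigl(x(1-x)\bigr).$$
Group the terms according to the value $t=x(1-x)$. For fixed $t\in\mathbb{F}_q$, the equation $x^2-x+t=0$ has discriminant $1-4t$. Completing the square, which is legitimate since $2$ is invertible, shows that the number of solutions $x\in\mathbb{F}_q$ is exactly $1+\phi_q(1-4t)$. This formula is correct in all three cases: zero, one or two roots, with the convention $\phi_q(0)=0$ covering the double-root case. Hence
$$J_q(\psi,\psi)=\sum_{t\in\mathbb{F}_q}\psi(t)\bigl(1+\phi_q(1-4t)\bigr).$$

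\textbf{Step 2: discard the constant part and rescale.} Since $\psi$ is nontrivial, $\sum_{t}\psi(t)=0$, which leaves
$$J_q(\psi,\psi)=\sum_{t}\psi(t)\phi_q(1-4t).$$
Substitute $u=4t$; this is a bijection of $\mathbb{F}_q$ because $4\ne 0$. Using $\psi(t)=\psi(4)^{-1}\psi(u)$, we get
$$J_q(\psi,\psi)=\psi(4)^{-1}\sum_{u}\psi(u)\phi_q(1-u)=\psi(4)^{-1}J_q(\psi,\phi_q).$$

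\textbf{Step 3: conclude.} The substitution $x\mapsto 1-x$ shows that Jacobi sums are symmetric, so $J_q(\psi,\phi_q)=J_q(\phi_q,\psi)$. Multiplying through by $\psi(4)$ then gives the claimed identity, and taking $\psi=\omega_q^{-k}$ yields the lemma.

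The only point requiring care is the root count $1+\phi_q(1-4t)$, which relies on $p$ being odd. Otherwise the argument is routine, and it works equally over $\mathbb{C}$ or $\mathbb{Q}_p^{\alg}$ since only finite sums of character values are involved.
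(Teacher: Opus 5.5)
Your proof is correct. The root count $1+\phi_q(1-4t)$ for $x(1-x)=t$ is valid for odd $q$, as are the vanishing of $\sum_t\psi(t)$ for nontrivial $\psi$, the rescaling $u=4t$, and the symmetry $J_q(\psi,\phi_q)=J_q(\phi_q,\psi)$. Together they give $J_q(\phi_q,\psi)=\psi(4)J_q(\psi,\psi)$ for every nontrivial $\psi$.

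The paper gives no proof of this lemma; it simply cites Theorem 2.1.4 of Berndt--Evans--Williams. Your argument is the standard completing-the-square computation. An equivalent formulation writes $4x(1-x)=1-(2x-1)^2$ and then uses $\sum_m\psi(1-m^2)=\sum_m\psi(1-m)\bigl(1+\phi_q(m)\bigr)$. Grouping by $t$ and counting roots, as you do, is the same idea in a different order.

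What your version buys is self-containedness. It works verbatim for every odd prime power $q$. It also works for characters valued in either $\mathbb{C}$ or $\mathbb{Q}_p^{\alg}$, because only finite sums of character values are involved. That closing remark of yours is what justifies applying the identity to the $\mathbb{Q}_p^{\alg}$-valued Teichm\"uller character $\omega_q^{-k}$, which is exactly how the paper uses it in Section 4 without comment. The citation buys only brevity.
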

	
Wu and Pan \cite[Lemma 4.1]{Wu and Pan} obtained the following result.

\begin{lemma}\label{Lem. Legendre by Wu and Pan}
	Let $p\equiv 1\pmod 4$ be a prime and let 
	$$B_p=\prod_{0<k<p/4}\binom{2k}{k}.$$
	Then 
	$$\left(\frac{B_p}{p}\right)=\left(\frac{2}{p}\right)=\left(\frac{\frac{p-1}{2}!}{p}\right).$$
\end{lemma}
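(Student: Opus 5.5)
The plan is to reduce $B_p$ modulo squares to a single binomial coefficient and then finish with Wilson's theorem. Throughout, let $p\equiv 1\pmod 4$, and write $n=(p-1)/2$ and $m=(p-1)/4$, so $n=2m$ and $B_p=\prod_{1\le k\le m}\binom{2k}{k}$.

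\emph{Step 1.} I will use the standard congruence
$$\binom{2k}{k}\equiv(-4)^k\binom{n}{k}\pmod p,\qquad 0\le k\le n.$$
It follows from
$$\binom{n}{k}=\prod_{j=1}^{k}\frac{n-j+1}{j}\equiv\prod_{j=1}^{k}\frac{-(2j-1)/2}{j}=\frac{(-1)^k(2k-1)!!}{2^k k!},$$
together with $(2k)!=2^k k!\,(2k-1)!!$, which gives $\binom{2k}{k}=2^k(2k-1)!!/k!$. Since $(\frac{-4}{p})=(\frac{-1}{p})=1$ for $p\equiv1\pmod 4$, the powers of $-4$ do not affect the symbol, so
$$\left(\frac{B_p}{p}\right)=\left(\frac{\prod_{k=1}^{m}\binom{n}{k}}{p}\right).$$

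\emph{Step 2.} The factor $\binom{n}{0}=1$ may be included freely. By the symmetry $\binom{n}{k}=\binom{n}{n-k}$,
$$\prod_{k=0}^{m}\binom{n}{k}=\binom{n}{m}\prod_{k=0}^{m-1}\binom{n}{k},\qquad \prod_{k=0}^{m-1}\binom{n}{k}=\prod_{k=m+1}^{n}\binom{n}{k}.$$
Hence $\prod_{k=0}^{m}\binom{n}{k}$ equals $\binom{2m}{m}$ times a square, namely the square root of $\prod_{k=0}^{n}\binom{n}{k}\big/\binom{n}{m}$. None of these integers is divisible by $p$, because $n<p$. Moreover $\binom{2m}{m}=(2m)!/(m!)^2$ agrees with $(2m)!=\frac{p-1}{2}!$ up to a nonzero square. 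Therefore
$$\left(\frac{B_p}{p}\right)=\left(\frac{\frac{p-1}{2}!}{p}\right).$$

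\emph{Step 3.} By Wilson's theorem and $(-1)^{n}=1$ (as $n$ is even), we have $\left(\frac{p-1}{2}!\right)^2\equiv-1\pmod p$. Euler's criterion then gives
$$\left(\frac{\frac{p-1}{2}!}{p}\right)\equiv\left(\tfrac{p-1}{2}!\right)^{(p-1)/2}=\left(\left(\tfrac{p-1}{2}!\right)^2\right)^{(p-1)/4}\equiv(-1)^{(p-1)/4}\pmod p.$$
Checking the classes $p\equiv1\pmod 8$ and $p\equiv5\pmod 8$ separately shows that $(-1)^{(p-1)/4}=(-1)^{(p^2-1)/8}=(\frac{2}{p})$, which proves both equalities.

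The only step that needs real care is Step 1: getting the sign $(-4)^k$ right in the congruence $\binom{2k}{k}\equiv(-4)^k\binom{(p-1)/2}{k}$, and making sure that $(\frac{-4}{p})=1$ is exactly what lets it drop out. After that, the symmetry pairing in Step 2 and the Wilson computation in Step 3 are routine.
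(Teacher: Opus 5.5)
Your Steps 1 and 3 are correct. Step 2 has a genuine gap, and it sits exactly where the content of the lemma lies. The symmetry $\binom{n}{k}=\binom{n}{n-k}$ shows that $\prod_{k\ne m}\binom{n}{k}=S^2$ with $S=\prod_{k=0}^{m-1}\binom{n}{k}$. So $S$ is the square root of a square, but that does not make $S$ itself a square, and in general it is not: for $p=13$ you get $S=\binom{6}{1}\binom{6}{2}=90$.

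Since $\bigl(\frac{\binom{n}{m}}{p}\bigr)=\bigl(\frac{n!}{p}\bigr)$, your Step 1 shows that the first equality of the lemma is \emph{equivalent} to $\bigl(\frac{S}{p}\bigr)=1$. Step 2 therefore assumes precisely what must be proved. For $p=13$, $90\equiv-1\pmod{13}$ is indeed a residue, but that is the lemma at work, not the pairing. (The paper gives no proof of this statement; it quotes it from Wu and Pan.)

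To close the gap you need an actual count of residues. Let $\chi=(\frac{\cdot}{p})$. The bookkeeping of the paper's Case 1 (done there for $p\equiv 3\pmod 4$) gives $\chi(B_p)=\prod_{k=1}^{m}\chi((2k)!)=\chi(n!)^m\,t$, where $t$ is the product of $\chi(j)$ over $0<j<p/2$ with $j\equiv 0,3\pmod 4$. So you must show $t=1$.

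One way to do this:
\begin{itemize}
\item Let $c_1,c_2,c_3,c_4$ be the products of $\chi(j)$ over the integers $j$ in $(0,\frac p8)$, $(\frac p8,\frac p4)$, $(\frac p4,\frac{3p}8)$, $(\frac{3p}8,\frac p2)$.
\item For $x$ prime to $p$, let $|x|_p\in(0,p/2)$ be the absolute value of its least absolute residue. Because $\chi(-1)=1$, the maps $j\mapsto|4j|_p$ and $j\mapsto|2j|_p$ permute $\{1,\dots,n\}$.
\item The map $j\mapsto|4j|_p$ preserves $\chi$ and carries the four intervals onto the classes $0,1,3,2\pmod 4$ respectively. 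Hence $t=c_1c_3$.
\item The map $j\mapsto|2j|_p$ multiplies $\chi$ by $\chi(2)$ and carries the first and fourth intervals onto $(0,\frac p4)$. Hence $c_1c_2=\chi(2)^m c_1c_4$.
\item Your Step 3 gives $c_1c_2c_3c_4=\chi(n!)=(-1)^m$.
\end{itemize}
Combining these, $t=(-1)^m c_2c_4=(-1)^{m}\chi(2)^m=(-1)^{m+m^2}=1$. Therefore $\chi(B_p)=\chi(n!)^m=(-1)^{m^2}=(\frac{2}{p})$, as required.
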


Mordell \cite{Mordell} obtained the following classical result in 1961.

\begin{lemma}\label{Lem. Mordell}
	Let $p\equiv 3\pmod 4$ be a prime with $p>3$. Then 
	$$\frac{p-1}{2}!\equiv (-1)^{\frac{h(-p)+1}{2}} \pmod {p},$$
	where $h(-p)$ is the class number of $\mathbb{Q}(\sqrt{-p})$. 
\end{lemma}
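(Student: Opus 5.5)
Since this is Mordell's classical theorem, quoted from \cite{Mordell}, I will only sketch the standard route. It combines Wilson's theorem, a Legendre-symbol computation, and Dirichlet's class number formula for $\mathbb{Q}(\sqrt{-p})$. Throughout, $p\equiv 3\pmod 4$, $p>3$, and $n=(p-1)/2$ is odd.

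\emph{Step 1 (the value is $\pm1$).} Pairing $a$ with $p-a$ in Wilson's theorem gives
$$-1\equiv (p-1)!\equiv (-1)^{n}\left(n!\right)^2=-\left(n!\right)^2\pmod p.$$
Hence $(n!)^2\equiv 1$, so $n!\equiv\pm1\pmod p$. Since $(\frac{-1}{p})=-1$, the residues $1$ and $-1$ have different Legendre symbols. Therefore $n!\equiv \left(\frac{n!}{p}\right)\pmod p$. Writing $N$ for the number of quadratic non-residues and $R$ for the number of quadratic residues in $(0,p/2)$, we get
$$n!\equiv (-1)^N\pmod p, \qquad R+N=n.$$

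\emph{Step 2 (Dirichlet's formula).} For $p\equiv 3\pmod 4$ with $p>3$, the class number formula in its half-interval form reads
$$\left(2-\left(\frac{2}{p}\right)\right)h(-p)=\sum_{0<a<p/2}\left(\frac{a}{p}\right)=R-N.$$
Hence $2N=n-\bigl(2-(\frac{2}{p})\bigr)h(-p)$. This is the only deep input; I would quote it as known rather than reprove it.

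\emph{Step 3 (parity check).} It remains to show $N\equiv (h+1)/2\pmod 2$, where $h=h(-p)$ is odd by genus theory (or directly from the formula above, since $n$ is odd).
\begin{itemize}
\item If $p=8m+3$, then $(\frac{2}{p})=-1$ and $N=(4m+1-3h)/2\equiv(1-3h)/2\pmod 2$. Adding $(h+1)/2$ gives $1-h$, which is even.
\item If $p=8m+7$, then $(\frac{2}{p})=1$ and $N=(4m+3-h)/2\equiv(3-h)/2\pmod 2$. Adding $(h+1)/2$ gives $2$.
\end{itemize}
In both cases $(-1)^N=(-1)^{(h+1)/2}$, which proves the lemma.

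The main obstacle is Step 2: the analytic class number formula is the genuinely nontrivial ingredient. Steps 1 and 3 are elementary bookkeeping, and the only delicate points there are sign conventions.
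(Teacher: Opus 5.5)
Your proof is correct: Wilson's theorem and $(\frac{-1}{p})=-1$ give $\frac{p-1}{2}!\equiv(-1)^N$, and the half-interval class number formula $\sum_{0<a<p/2}(\frac{a}{p})=(2-(\frac{2}{p}))h(-p)$, combined with your mod-8 parity check, gives $(-1)^N=(-1)^{(h(-p)+1)/2}$. The paper gives no proof of its own and only cites Mordell, whose classical argument is essentially the one you give; the same class number formula is also what the paper uses in the proof of the following lemma.
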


The next lemma will play a key role in the proof of our second theorem.

\begin{lemma}\label{Lem. Legendre symbol when q=3 mod 4}
	Let $p\equiv 3\pmod 4$ be a prime with $p>3$, and let 
	$$y_p=\#\left\{k\in \left(0,\frac{p}{8}\right):\ \left(\frac{k}{p}\right)=-1\right\},\ \text{and}\ z_p=\#\left\{k\in \left(\frac{p}{8},\frac{p}{4}\right):\ \left(\frac{k}{p}\right)=1\right\}.$$
	Then 
	$$y_p+z_p\equiv 
	\begin{cases}
		0 \pmod{2}                            &  \mbox{if}\ p\equiv 3\pmod 8,\\
		\frac{1+h(-p)}{2} \pmod 2 &  \mbox{if}\ p\equiv 7\pmod 8.
	\end{cases}$$
\end{lemma}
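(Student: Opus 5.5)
The plan is to reduce $y_p+z_p$ modulo $2$ to the number of quadratic non-residues in $(0,p/4)$, and then evaluate that number using the classical class number formula for the character sum over $(0,p/4)$.

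\textbf{Step 1 (reduction).} Write $r_1,n_1$ for the numbers of residues and non-residues in $(0,p/8)$, and $r_2,n_2$ for the same counts in $(p/8,p/4)$. Since $p$ is odd, no integer equals $p/8$ or $p/4$, so
$$r_2+n_2=\lfloor p/4\rfloor-\lfloor p/8\rfloor .$$
Hence
$$y_p+z_p=n_1+r_2\equiv n_1+n_2+\lfloor p/4\rfloor-\lfloor p/8\rfloor \pmod 2 .$$
Let $N_p=n_1+n_2$ be the number of non-residues in $(0,p/4)$. Writing
$$S_p=\sum_{0<k<p/4}\left(\frac{k}{p}\right),$$
we have $N_p=(\lfloor p/4\rfloor-S_p)/2$. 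So everything reduces to knowing $S_p$ modulo $4$.

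\textbf{Step 2 (the character sum).} I would invoke the classical formula (Dirichlet; see Berndt's work on character sums over $(0,p/4)$ and \cite{BEW}): for a prime $p\equiv 3\pmod 4$ with $p>3$,
$$S_p=\frac{1}{2}\left(1+\left(\frac{2}{p}\right)\right)h(-p).$$
Thus $S_p=0$ when $p\equiv 3\pmod 8$, and $S_p=h(-p)$ when $p\equiv 7\pmod 8$. As a sanity check, $p=7,11,23$ give $S_p=1,0,3$, which matches $h(-7)=h(-11)=1$ and $h(-23)=3$.

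If one wants to avoid citing this formula, it can be derived in the usual way. Split the standard identity
$$\sum_{0<k<p/2}\left(\frac{k}{p}\right)=\left(2-\left(\frac{2}{p}\right)\right)h(-p)$$
into the ranges $(0,p/4)$ and $(p/4,p/2)$, and relate the second range to $(0,p/4)$ via $k\mapsto (p-1)/2-\dots$ or via multiplication by $2$, using $\left(\frac{-1}{p}\right)=-1$. This derivation is the step I expect to be the main obstacle, if done from scratch. An alternative route uses Lemma \ref{Lem. Mordell}: compare $\prod_{0<k<p/4}k$ and $\frac{p-1}{2}!$ modulo $p$ through Gauss-lemma type sign counting as in Lemma \ref{Lem. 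Gauss lemma for q=3 mod 4}. This should produce the factor $(-1)^{(h(-p)+1)/2}$ directly, but it is more delicate.

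\textbf{Step 3 (case computation).} If $p=8m+3$, then $\lfloor p/4\rfloor=2m$, $\lfloor p/8\rfloor=m$ and $S_p=0$. So $N_p=m$ and
$$y_p+z_p\equiv m+2m-m\equiv 0 \pmod 2.$$
If $p=8m+7$, then $\lfloor p/4\rfloor=2m+1$, $\lfloor p/8\rfloor=m$ and $S_p=h(-p)$, which is odd. So $N_p=m+(1-h(-p))/2$ and
$$y_p+z_p\equiv 1+\frac{1-h(-p)}{2}=\frac{3-h(-p)}{2}\equiv\frac{1+h(-p)}{2}\pmod 2,$$
since the difference $1-h(-p)$ is even. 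This gives the lemma.
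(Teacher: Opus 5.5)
Your proof is correct and follows essentially the paper's route. The paper also evaluates $\sum_{0<k<p/4}\left(\frac{k}{p}\right)=\frac{1}{2}\left(1+\left(\frac{2}{p}\right)\right)h(-p)$ by combining the Dirichlet class number formula for $\sum_{0<k<p/2}\left(\frac{k}{p}\right)$ with the same even/odd splitting of $(0,p/2)$ that you sketch (so the step you flagged as the main obstacle takes only a few lines), and then reads off $y_p+z_p$ modulo $2$ from $\lfloor p/8\rfloor-y_p+z_p=\#\{k\in(0,p/4):\left(\frac{k}{p}\right)=1\}$; the only difference is that you count non-residues in $(0,p/4)$ where the paper counts residues, which is immaterial.
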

	
\begin{proof}
	We first evaluate the sum 
	$$s_l=\sum_{0<k<p/4}\left(\frac{k}{p}\right).$$
	Let 
	$$s_r=\sum_{p/4<k<p/2}\left(\frac{k}{p}\right).$$
	Then by the Dirichlet class number formula, we have 
	\begin{equation}\label{Eq. a in the proof of Lem. Legendre symbol when q=3 mod 4}
		s_l+s_r=s_w=\sum_{0<k<p/2}\left(\frac{k}{p}\right)=\left(2-\left(\frac{2}{p}\right)\right)\cdot h(-p).
	\end{equation}
	On the other hand, since $p\equiv 3\pmod 4$, one can verify that 
	\begin{align*}
		s_w
		&=\sum_{k\in(0, p/2)\cap 2\mathbb{Z}}\left(\frac{k}{p}\right)+\sum_{k\in(0, p/2)\cap (2\mathbb{Z}+1)}\left(\frac{k}{p}\right)\\
		&=\sum_{0<k<p/4}\left(\frac{2k}{p}\right)+\sum_{k\in(p/2, p)\cap 2\mathbb{Z}}\left(\frac{p-k}{p}\right)\\
		&=\sum_{0<k<p/4}\left(\frac{2k}{p}\right)-\sum_{k\in(p/2, p)\cap 2\mathbb{Z}}\left(\frac{k}{p}\right)\\
		&=\sum_{0<k<p/4}\left(\frac{2k}{p}\right)-\sum_{p/4<k<p/2}\left(\frac{2k}{p}\right)\\
		&=\left(\frac{2}{p}\right)\cdot (s_l-s_r).
	\end{align*}
	Combining this with (\ref{Eq. a in the proof of Lem. Legendre symbol when q=3 mod 4}), we obtain 
	\begin{equation}\label{Eq. b in the proof of Lem. Legendre symbol when q=3 mod 4}
		s_l=\frac{1}{2}\left(1+\left(\frac{2}{p}\right)\right)s_w=
		\begin{cases}
			0          &  \mbox{if}\ p\equiv 3\pmod 8,\\
			h(-p)   &  \mbox{if}\ p\equiv 7\pmod 8.
		\end{cases}
	\end{equation}
	
	Next we compute 
	$$r_l=\#\left\{k\in\left(0,\frac{p}{4}\right):\ \left(\frac{k}{p}\right)=1\right\}.$$
	Let 
	$$n_l=\#\left\{k\in\left(0,\frac{p}{4}\right):\ \left(\frac{k}{p}\right)=-1\right\}.$$
	Then 
	\begin{align*}
		r_l+n_l&=\frac{p-3}{4},\\
		r_l-n_l&=\sum_{0<k<p/4}\left(\frac{x}{p}\right)=s_l.
	\end{align*}
	From this and (\ref{Eq. b in the proof of Lem. Legendre symbol when q=3 mod 4}), we obtain 
	\begin{equation}\label{Eq. c in the proof of Lem. Legendre symbol when q=3 mod 4}
		r_l=\frac{p-3+4s_l}{8}=
		\begin{cases}
			\frac{p-3}{8}                   &  \mbox{if}\ p\equiv 3\pmod 8,\\
			\frac{p-3+4h(-p)}{8}    &  \mbox{if}\ p\equiv 7\pmod 8.
		\end{cases}
	\end{equation}
	
	Finally, we determine $y_p+z_p\mod{2\mathbb{Z}}$. By the definitions of $y_p$ and $z_p$, we obtain 
	$$\left\lfloor\frac{p}{8}\right\rfloor-y_p+z_p=r_l.$$
	Assembling this and (\ref{Eq. c in the proof of Lem. Legendre symbol when q=3 mod 4}) gives 
	$$y_p+z_p\equiv \left\lfloor\frac{p}{8}\right\rfloor+r_l\equiv 
	\begin{cases}
		0 \pmod{2}                             &  \mbox{if}\ p\equiv 3\pmod 8,\\
		\frac{1+h(-p)}{2}\pmod{2} &  \mbox{if}\ p\equiv 7\pmod 8.
	\end{cases}$$
	
	In view of the above, we have completed the proof.
\end{proof}
	
	Now we are in a position to prove our second theorem.
	
	{\noindent\bfseries Proof of Theorem \ref{Thm. local properties of Rq}}. Recall that $q=p^f=2n+1$ is an odd prime power, and that the Teich\"{u}muller character $\omega_q$ of $\mathbb{F}_q$ is a generator of $\widehat{\mathbb{F}_q^{\times}}$. By Theorem \ref{Thm. Rq is an integer}, the integer $R_q(\chi_q)$ is independent of the choice of the generator $\chi_q$ of $\widehat{\mathbb{F}_q^{\times}}$. Hence, throughout this proof, we set $R_q=R_q(\omega_q)$. 
	
	(i) We first consider the case $f>1$ and $q\neq 9$. 
	
	If $q\equiv 3\pmod 4$, then clearly $f\ge 3$. Hence $q\ge 9p>2p+3$. For $q\equiv 1\pmod 4$, since $q\neq 9$, we have $q=p^f\ge 5p>2p+3$. By the above results, we have 
	$$\frac{p+1}{2}<\frac{n}{2}.$$
	Note that 
	$$n+\frac{p+1}{2}=0\cdot 1+\frac{p+1}{2}\cdot p+\cdots+\frac{p-1}{2}\cdot p^{f-1}.$$
	Using the Lucas congruence, we obtain 
	\begin{equation}\label{Eq. a in the proof of Thm. local result}
		\binom{n+(p+1)/2}{(p+1)/2}\equiv \binom{0}{(p+1)/2}\binom{(p+1)/2}{0}\cdots\binom{(p-1)/2}{0}\equiv 0\pmod {p}.
	\end{equation}
	
	On the other hand, as $\omega_q^{\pm n}=\phi_q$, by Lemma \ref{Lem. transform of Jacobi sums} we obtain 
	\begin{align*}
		R_q
		&=\prod_{0<k<n/2}\left(J_q(\phi_q,\omega_q^k)+J_q(\phi_q,\omega_q^{-k})\right)\\
		&=\prod_{0<k<n/2}\left(\phi_q(-1)J_q(\omega_q^{-n},\omega_q^{-n-k})+J_q(\omega_q^{-n},\omega_q^{-k})\right).
	\end{align*}
	Applying Lemma \ref{Lem. Cohen} and noting that $n+(n+k)\ge q$ for any $k\in (0, n/2)$, we obtain 
	$$R_q\equiv \prod_{0<k<n/2}-\binom{n+k}{k}\equiv -\binom{n+(p+1)/2}{(p+1)/2}\prod_{k\in(0, n/2)\setminus\{(p+1)/2\}}-\binom{n+k}{k}\pmod{\mathfrak{p}}.$$
	Using (\ref{Eq. a in the proof of Thm. local result}) and noting that $R_q\in\mathbb{Z}$, we have 
	$$R_q\equiv 0\pmod{p}$$
	whenever $f>1$ and $q\neq 9$. 
	
	(ii) Next we consider the case $q=9$. 
	
	Since $R_9=J_9(\phi_9,\omega_9)+J_9(\phi_9,\omega_9^{-1})$, with the help of a computer, we obtain $R_9=-2$. 
	
	(iii) Now we focus on the case $f=1$, i.e., $q=p$. 
	
	In this case, $\mathbb{Q}_p(\zeta_{p-1})=\mathbb{Q}_p$ and $\mathfrak{p}=p\mathbb{Z}_p$. The desired results hold trivially when $p=3$. From now on, we suppose $p>3$. Assembling Lemma \ref{Lem. transform of Jacobi sums} and Lemmas \ref{Lem. Cohen}--\ref{Lem. BEW} gives
	\begin{align}\label{Eq. b in the proof of Thm. local result}
		R_p
		&=\prod_{0<k<n/2}\left(J_p(\phi_p,\omega_p^k)+J_q(\phi_p,\omega_p^{-k})\right)\notag\\
		&=\prod_{0<k<n/2}\left(\phi_p(-1)J_p(\omega_p^{-n},\omega_p^{-n-k})+J_p(\omega_p^{-n},\omega_p^{-k})\right)\notag\\
		&=\prod_{0<k<n/2}\left(\phi_p(-1)J_p(\omega_p^{-n},\omega_p^{-n-k})+\omega_p^{-k}(4)J_p(\omega_p^{-k},\omega_p^{-k})\right)\notag\\
		&\equiv \prod_{0<k<n/2}\omega_p^{-k}(4)J_p(\omega_p^{-k},\omega_p^{-k})\notag\\
		&\equiv (-1)^{\lfloor n/2 \rfloor}\left(\frac{1}{4}\right)^{\lfloor n/2 \rfloor(1+\lfloor n/2 \rfloor)/2}\prod_{0<k<n/2}\binom{2k}{k}\pmod{p\mathbb{Z}_p}.
	\end{align}
	
	(iv) Finally, we concentrate on $(\frac{R_p}{p})$. We will complete the remaining proof by considering two cases.
	
	{\bf Case 1}. $p\equiv 3\pmod 4$ and $p>3$. 
	
	We first compute the product 
	$$t_w=\prod_{0<k<p/2}\left(\frac{k}{p}\right)^{\lfloor\frac{k-1}{2}\rfloor}.$$
	For $j=0,3$, set 
	$$n_j=\#\left\{k\in\left(0,\frac{p}{2}\right): \left(\frac{k}{p}\right)=-1\ \text{and}\ k\equiv j\pmod 4\right\}.$$
	Since $p\equiv 3\pmod 4$, one can verify that 
	\begin{equation*}
		n_0=\#\left\{k\in\left(0,\frac{p}{8}\right): \left(\frac{k}{p}\right)=-1\right\}=y_p,
	\end{equation*}
	and that 
	\begin{align*}
		n_3
		&=\#\left\{k\in\left(\frac{p}{2},p\right):\ \left(\frac{p-k}{p}\right)=-1\ \text{and}\ k\equiv 0\pmod 4\right\}\\
		&=\#\left\{k\in\left(\frac{p}{2},p\right):\ \left(\frac{k}{p}\right)=1\ \text{and}\ k\equiv 0\pmod 4\right\}\\
		&=\#\left\{k\in\left(\frac{p}{8},\frac{p}{4}\right):\ \left(\frac{k}{p}\right)=1\right\}\\
		&=z_p,
	\end{align*}
	where $y_p$ and $z_p$ are defined by Lemma \ref{Lem. Legendre symbol when q=3 mod 4}. Applying Lemma \ref{Lem. Legendre symbol when q=3 mod 4} to $t_w$, we see that 
	\begin{equation}\label{Eq. c in the proof of Thm. local result for q=3 mod 4}
		t_w=(-1)^{n_0+n_3}=(-1)^{y_p+z_p}=
		\begin{cases}
			1                                             &  \mbox{if}\ p\equiv 3\pmod 8,\\
			(-1)^{\frac{h(-p)+1}{2}}  &  \mbox{if}\ p\equiv 7\pmod 8.
		\end{cases}
	\end{equation}
	
	Next we focus on $(\frac{B_p}{p})$, where 
		$$B_p=\prod_{0<k<p/4}\binom{2k}{k}.$$
	By computations one can verify that 
	\begin{align*}
		\left(\frac{B_p}{p}\right)
		&=\prod_{1\le k\le (p-3)/4}\left(\frac{(2k)!}{p}\right)\\
		&=\prod_{1\le k\le (p-3)/2}\left(\frac{k}{p}\right)^{\frac{p-3}{4}}\left(\frac{k}{p}\right)^{\lfloor\frac{k-1}{2}\rfloor}\\
		&=\prod_{1\le k\le (p-1)/2}\left(\frac{k}{p}\right)^{\frac{p-3}{4}}\left(\frac{k}{p}\right)^{\lfloor\frac{k-1}{2}\rfloor}\\
		&=\left(\frac{\frac{p-1}{2}!}{p}\right)^{\frac{p-3}{4}}\cdot t_w.
	\end{align*}
	Using Lemma \ref{Lem. Mordell} and (\ref{Eq. c in the proof of Thm. local result for q=3 mod 4}), we obtain 
	\begin{equation*}
		\left(\frac{B_p}{p}\right)=(-1)^{\frac{h(-p)+1}{2}\cdot\frac{p-3}{4}}\cdot t_w=1.
	\end{equation*}
	From this and (\ref{Eq. b in the proof of Thm. local result}), we obtain 
	$$\left(\frac{R_p}{p}\right)=(-1)^{\frac{p-3}{4}}\left(\frac{B_p}{p}\right)=(-1)^{\frac{p-3}{4}}=\left(\frac{-2}{p}\right).$$
	
	{\bf Case 2}. $p=2n+1\equiv 1\pmod 4$. 
	
	Note that $(\frac{-1}{p})=1$ in this case. By (\ref{Eq. b in the proof of Thm. local result}) and Lemma \ref{Lem. Legendre by Wu and Pan}, we obtain 
	\begin{align*}
		\left(\frac{R_p}{p}\right)=\left(\frac{\prod_{0<k<n/2}\binom{2k}{k}}{p}\right)=\left(\frac{B_p}{p}\right)\left(\frac{\binom{n}{n/2}}{p}\right)=\left(\frac{B_p}{p}\right)\left(\frac{n!}{p}\right)=1,
	\end{align*}
	where $B_p$ is defined by Lemma \ref{Lem. Legendre by Wu and Pan}.
	
	In view of the above, we have completed the proof. \qed 
	
	\Ack\ This research was supported by the Natural Science Foundation of China (Grant No. 12101321) and the Natural Science Foundation of the Higher Education Institutions of Jiangsu Province (Grant No. 25KJB110010).

	\end{document}